\newtheorem{theorem}{Theorem}
\numberwithin{theorem}{section}
\newtheorem{proposition}[theorem]{Proposition}
\newtheorem{corollary}[theorem]{Corollary}
\newtheorem{definition}[theorem]{Definition}
\newtheorem{example}[theorem]{Example}
\newtheorem{conjecture}[theorem]{Conjecture}
\theoremstyle{definition}
\newcommand{\PP}{\mathbb{P}}
\newcommand{\bP}{\mathbb{P}}
\newcommand{\RR}{\mathbb{R}}
\newcommand{\bR}{\mathbb{R}}
\newcommand{\bC}{\mathbb{C}}
\newcommand{\bZ}{\mathbb{Z}}
\newcommand {\cT} {\mathcal T}
\newcommand{\cM}{\mathcal{M}}
\newcommand{\rA}{\mathrm{A}}
\newcommand{\rC}{\mathrm{C}}
\newcommand{\fS}{\mathfrak{S}}
\newcommand {\wt} {\widetilde}
\newcommand {\bs} {\backslash}
\newcommand{\AS}{\mathrm{as}}
\newcommand{\CS}{\mathrm{cs}}
\newcommand {\pp} {{+}{+}}
\DeclareMathOperator{\Trop}{Trop}
\DeclareMathOperator{\init}{in}
\DeclareMathOperator{\sgn}{sgn}
\DeclareMathOperator{\Gr}{Gr}
\DeclareMathOperator{\SR}{SR}
\mathchardef\standardl=\mathcode`l
\newcommand{\deactivatel}{\mathcode`l=\standardl}
\edef\operator@font{\operator@font\noexpand\deactivatel}
\title{Tropicalizing binary geometries}
\author{Shelby Cox}
\address{%
  MPI for Mathematics in the Sciences, Leipzig, Germany \\
\email{shelby.cox@mis.mpg.de}
}
  \author{Igor Makhlin}
  \address{Technische Universität Berlin, Berlin, Germany\\
\email{iymakhlin@gmail.com}
}
\date{2024/10/15}
\begin{document}
\maketitle
\begin{abstract}
\noindent 
The type A cluster configuration space, commonly known as $\cM_{0,n}$, is the very affine part of the binary geometry associated with the associahedron. 
The tropicalization of $\cM_{0,n}$ can be realized as the space of phylogenetic trees and its signed tropicalizations as the dual-associahedron subfans. 
We give a concise overview of this construction and propose an extension to type C. 
The type C cluster configuration space $\cM_{\rC_l}$ arises from the binary geometry associated with the cyclohedron. 
We define a space of axially symmetric phylogenetic trees containing many dual-associahedron and dual-cyclohedron subfans. 
We conjecturally realize the tropicalization of $\cM_{\rC_l}$ as the defined space and its signed tropicalizations as the aforementioned subfans.
\end{abstract}

\section*{Introduction}

The notion of binary geometries was introduced in~\cite{AHLT} in the context of particle scattering in arbitrary space-time dimension. In particle physics it arises from the ABHY kinematic associahedron (\cite{ABHY}) and its extension \cite{BDMTY} to generalized associahedra of other finite-type cluster algebras (in the sense of~\cite{FZ,CFZ}). 

From a cluster-algebraic standpoint one may view binary geometries as cluster configuration spaces, i.e.\ partial compactifications of torus quotients of cluster varieties, cf.\ also~\cite{HLPZ,AHL}. Such binary geometries are realized as affine varieties cut out by the so-termed $u$-equations: defining relations explicitly determined by the combinatorics of the generalized associahedron. In particular, the resulting variety admits a stratification governed by the face poset of the same polytope. In~\cite{L} this definition is widely generalized to the setting of flag simplicial complexes. In this paper we follow the approach in~\cite{L}, nonetheless, our main objects of study are the binary geometries given by the type A and type C associahedra, and the respective finite-type cluster configuration spaces.

Let us first consider type A. The cluster configuration space $\cM_{\rA_{n-3}}$ is commonly denoted by $\cM_{0,n}$; it is the moduli space of $n$-tuples of distinct points in $\bP^1$. It may also be viewed as the very affine part of the binary geometry $\wt\cM_{0,n}$ associated with the simplial complex dual to the associahedron. The tropicalization of $\cM_{0,n}$ (and, thus, of $\wt\cM_{0,n}$) is well-studied; its description can be found in~\cite{GKM}, the book~\cite[Section 6.4]{ITG} and other sources but traces back at least to~\cite{SpSt,T}. This description characterizes $\Trop\cM_{0,n}$ as the space of phylogenetic trees introduced in~\cite{RW,BHV}. Furthermore, $\Trop\cM_{0,n}$ is naturally realized as the union of $(n-1)!/2$ dual-associahedron subfans. These are precisely the signed tropicalizations (i.e.\ generalized positive tropicalizations) corresponding to all sign patterns of coordinates occurring in $\cM_{0,n}(\bR)$. In the first sections of this paper we recall the definitions and results pertaining to $\cM_{0,n}$ and its tropicalization. We also write down explicit formulas for the cones of $\Trop\cM_{0,n}$ in the $u$-coordinates arising from the binary geometry structure. This overview provides a background and motivation for our main results which concern type~C.

Our first result (see Sections~\ref{ASsec},~\ref{CSsec}) is the construction of an adequate analogue of the space of phylogenetic trees for the symplectic setting. The previously mentioned description of $\Trop\cM_{0,n}$ realizes it as the fan over a simplicial complex with faces enumerated by phylogenetic trees with $n$ labeled leaves. In type C we, instead, consider trees with $2n$ labeled leaves that are \textbf{axially} symmetric in a natural sense. We define the\textit{ complex of axially symmetric phylogenetic trees} $\Theta_{\AS}(n)$, a simplicial complex of dimension $n-2$ with faces enumerated by such trees. It is naturally the union of $2^{n-2}n!$ dual-associahedron subcomplexes. We also consider trees with $2n$ labeled leaves that are \textbf{centrally} symmetric. The resulting \textit{complex of centrally symmetric phylogenetic trees} $\Theta_{\CS}(n)$ is shown to be a subcomplex of $\Theta_{\AS}(n)$. Furthermore, the subcomplex $\Theta_{\CS}(n)$ is itself the union of $2^{n-2}(n-1)!$ dual-cyclohedron subcomplexes.

Next, recall that the cyclohedron is the generalized associahedron of type $\rC_{n-1}$. Accordingly, the cluster configuration space $\cM_{\rC_{n-1}}$ can be realized in \textit{$u$-coordinates} as the very affine part of the binary geometry defined by the dual-cyclohedron complex, see~\cite{AHL}. 
This binary geometry is an affine variety of dimension $n-1$. Our second result is a conjectural description of its tropicalization.

\begin{conjecture}[{cf.\ Conjectures~\ref{mainconj},~\ref{signedconj}}]\label{introconj}
With respect to the $u$-coordinates, $\Trop\cM_{\rC_{n-1}}$ is combinatorially equivalent to a fan over the complex of axially symmetric phylogenetic trees $\Theta_\AS(n)$. Furthermore, the subfans given by the $2^{n-2}n!$ dual associahedra and the $2^{n-2}(n-1)!$ dual cyclohedra are the signed tropicalizations corresponding to all sign patterns of coordinates occurring in $\cM_{\rC_{n-1}}(\bR)$.
\end{conjecture}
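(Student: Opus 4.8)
The plan is to deduce the conjecture from the type~A picture by way of the folding of $\rA_{2n-3}$ to $\rC_{n-1}$. First I would fix an explicit model of $\cM_{\rC_{n-1}}$ that identifies it with (a component of) the symmetric locus of $\cM_{0,2n}$ under the involution $\iota\in\mathrm{PGL}_2$ realizing this folding, so that a point of $\cM_{\rC_{n-1}}$ is recorded by $2n$ marked points on $\bP^1$ invariant under $\iota$, where $\iota$ acts on the circle $\bP^1(\bR)$ as a \emph{reflection}. It is this reflective, rather than antipodal, symmetry that forces $\Theta_\AS(n)$ --- and not $\Theta_\CS(n)$ --- to govern the full tropicalization. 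I would then express the type~C $u$-coordinates through the cross-ratio $u$-coordinates of $\cM_{0,2n}$: each $\iota$-fixed diagonal $d$ of the $2n$-gon contributes a single variable $u_d$, whereas the remaining diagonals are permuted by $\iota$ in pairs $\{d,\bar d\}$ and contribute the products $u_d u_{\bar d}$. This reduces the conjecture to a statement about $\Trop\cM_{0,2n}$, which by the type~A sections is the space of phylogenetic trees on $2n$ leaves, together with the induced monomial map to $u$-space.

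For the tropical computation, the monomial map above tropicalizes to the piecewise-linear map sending a tree metric with edge parameters $U_\bullet$ to the tuple with entries $U_d$ on $\iota$-fixed diagonals and $U_d+U_{\bar d}$ on the pairs. For the inclusion $\Trop\cM_{\rC_{n-1}}\subseteq(\text{fan over }\Theta_\AS(n))$ I would exhibit a tropical basis of the type~C $u$-ideal: every $u$-equation is of the shape $u_a+(\text{monomial in the }u_b)=1$, so the incompatibility bookkeeping that works in type~A can be rerun under the imposed reflection symmetry, forcing a point of the tropical variety to be supported on a collection of $u$-variables that is precisely a face of $\Theta_\AS(n)$. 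For the reverse inclusion, to each maximal face of $\Theta_\AS(n)$ --- an axially symmetric triangulation of the $2n$-gon --- I would attach a one-parameter family over a valued field, namely a reflection-symmetric smoothing of the associated stable $2n$-pointed curve, arranged so that its valuation vector lies in the interior of the prescribed cone; symmetry of the family places it in $\cM_{\rC_{n-1}}$. Finally I would check that these cones glue into the fan over the simplicial complex $\Theta_\AS(n)$, so that the equivalence is combinatorial and not merely set-theoretic, using that the faces of $\Theta_\AS(n)$ encode exactly the compatibility relations among the $u$-variables.

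For the signed statement I would first enumerate the sign vectors of $u$-coordinates occurring in $\cM_{\rC_{n-1}}(\bR)$. Through the model these correspond to $\iota$-symmetric cyclic orders of the $2n$ real points, refined by $2^{n-2}$ independent sign choices on the fixed-diagonal variables; analysing how the real points can be placed relative to the fixed locus of $\iota$ shows that they split into two combinatorial types, giving $2^{n-2}n!$ and $2^{n-2}(n-1)!$ patterns respectively. For each pattern the signed (generalized positive) tropicalization is obtained by restricting the corresponding dual-associahedron subfan of $\Trop\cM_{0,2n}$ to the symmetric locus and pushing it forward along the above linear map. One then checks that the first family of patterns yields the claimed dual associahedra and the second the claimed dual cyclohedra --- the latter using that the fixed complex of the folding reflection acting on a type~A associahedron is a cyclohedron --- and that these subfans together exhaust $\Theta_\AS(n)$.

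The main obstacle is the upper bound in the second paragraph: showing that $\Trop\cM_{\rC_{n-1}}$ has no cones beyond those coming from axially symmetric triangulations, i.e.\ that the naive incompatibility relations among the type~C $u$-variables already form a tropical basis. In type~A the analogous fact ultimately rests on the three-term Plücker relations forming a tropical basis for $\cM_{0,m}$, but the cyclohedron $u$-equations are less uniform --- the fixed-diagonal variables satisfy relations of a different degree from the paired ones --- so it is not clear a priori that the symmetric incompatibility bookkeeping is enough, and one may instead have to return to the Plücker presentation of $\cM_{0,2n}$, intersect it with the linear subspace cutting out the $\iota$-symmetric locus, and prove that this intersection remains tropically well-behaved. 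Ruling out such phantom cones is exactly what keeps the statement conjectural; by contrast the construction of points and the signed bookkeeping should be routine adaptations of the type~A arguments recalled earlier in the paper.
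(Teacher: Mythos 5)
The statement you are addressing is an open conjecture of the paper, not a theorem: the authors offer no proof, only a computational verification of the case $n=3$ (via Macaulay2, Section~\ref{C2section}) and consistency of the predicted count of sign patterns with a conjecture of~\cite{AHL}. So there is no paper argument to compare yours against, and the relevant question is whether your proposal actually closes the gap. It does not, and you say so yourself: the containment $\Trop\cM_{\rC_{n-1}}\subseteq(\text{fan over }\Theta_\AS(n))$ --- equivalently, that the type~C $u$-equations (or some enlargement of them) form a tropical basis, so that no ``phantom'' cones appear --- is exactly the content of Conjecture~\ref{mainconj}, and your text defers it to an unproved hope that ``the incompatibility bookkeeping can be rerun under the imposed reflection symmetry.'' A proof strategy that leaves precisely the conjectural step open is not a proof.

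Two further cautions about the route you sketch. First, the reduction to type~A via the symmetric locus of $\cM_{0,2n}$ is plausible and is consistent with how the paper builds its candidate cones (symmetric realizations of trees with $2n$ leaves, and the projection $\pi:D_{2n}\to D$ used to show $C_{T,\varphi}$ is simplicial), but the identification of $\cM_{\rC_{n-1}}$ in the $u$-coordinates of Definition~\ref{MCndef} with an $\iota$-fixed locus of $\cM_{0,2n}$, including your claim that the relevant real involution is reflective rather than antipodal, is itself something you would have to establish, not assume; the paper only defines $\cM_{\rC_{n-1}}$ by the cyclohedron $u$-equations following~\cite{AHL}. Second, even granting that identification, tropicalization does not commute with intersections or fixed loci: from $\Trop\cM_{0,2n}$ being the space of phylogenetic trees one only gets $\Trop(\text{symmetric locus})\subseteq(\Trop\cM_{0,2n})\cap(\text{linear space of symmetric weights})$, and the hard direction is again ruling out that this containment is strict in the relevant sense, plus showing the fan structure (not just the support) matches $\Theta_\AS(n)$. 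Your lower-bound construction via reflection-symmetric smoothings and your enumeration of sign patterns are reasonable and in the spirit of the type~A results recalled in Section~\ref{M0nsec} and of Conjecture~\ref{signedconj}, but as written the proposal establishes neither half of the conjecture.
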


In particular, the cones of $\Trop\cM_{\rC_{n-1}}$ correspond to faces of $\Theta_\AS(n)$. We also conjecture an explicit description in $u$-coordinates for every such cone. This proposed description should be viewed as an extension of the $u$-coordinate realization that was given for $\Trop\cM_{0,n}$. 

The main motivation for our conjectures is the case $n=3$ which is already highly nontrivial: $\Trop\cM_{\rC_2}$ is a fan in ambient dimension 6 with 21 maximal cones and 13 rays. This tropicalization can be found using Macaulay2 (\cite{M2}) and agrees fully with the hypothesized structure. In addition, the number of sign patterns provided by Conjecture~\ref{introconj} agrees with a conjecture made in~\cite{AHL}.

We hope the complex $\Theta_\AS(n)$ and our conjecture, if proved, to have several applications. For example, it is natural to expect $\Trop\cM_{\rC_{n-1}}$ to be directly related to the type C tropicalized cluster variety. Also, the well-studied Deligne--Mumford compactification $\overline\cM_{0,n}$ has a stratification governed by the face poset of $\Trop \cM_{0,n}$. One may hope for a similar construction to exist in type C. 

\section{Tropicalizations}

We give a short overview of tropicalizations and positive tropicalizations of polynomial ideals and algebraic varieties. The results concerning tropicalizations can be found in~\cite{SpSt}; for positive tropicalizations see~\cite{SpW}.

For a finite set $V$ consider the polynomial ring $S = \bC[u_i]_{i\in V}$. A real weight $w \in \bR^V$ can be viewed as the $\bR$-grading on $S$ that is equal to $w_i$ on $u_i$. A polynomial $p\in S$ can be decomposed into the sum of its $w$-homogeneous components. The \textit{initial part} $\init_w p$ is the nonzero homogeneous component of the minimal occurring grading. For an ideal $I \subset S$ we define the \textit{initial ideal} $\init_w I$ as the linear span of the set $\{\init_w p\}_{p\in I}$. It is easily checked that $\init_w I$ is also an ideal.

For a pair of ideals $I, J \subset S$ let $C(I,J)^\circ$ denote the set of all $w \in \bR^V$ for which $\init_w I = J$ and let $C(I,J)$ denote the closure of $C(I,J)^\circ$. Now suppose that the ideals $I$ and $J$ are monomial-free, i.e.\ contain no monomials. In this case the set $C(I,J)$ is a closed polyhedral cone and $C(I,J)^\circ$ is its relative interior (if both are nonempty). Furthermore, these cones form a polyhedral fan.
\begin{definition}
For a given monomial-free ideal $I\subset S$ the polyhedral fan formed by all nonempty cones of the form $C(I,J)$ with $J$ monomial-free is the \textit{tropicalization} of $I$. It is denoted by $\Trop I$.
\end{definition}

Usually $I$ arises as the defining ideal of an affine, projective or very affine variety $X$, and in this case $\Trop I$ is also referred to as $\Trop X$. However, note that the fan $\Trop X$ is not determined by $X$; it also depends on the chosen affine or projective embedding which is always implicit in the notation.

A key property of $\Trop I$ is that it is pure of dimension equal to the Krull dimension of $S/I$. Hence, $\Trop X$ has dimension $\dim X$ if $X$ is affine and $\dim X+1$ if $X$ is projective.


Next, the positive tropicalization is a subfan of the tropicalization.
\begin{definition}
For $I$ as above the \textit{positive tropicalization} $\Trop_{> 0} I$ is the fan formed by those cones $C(I,J)$ for which $J$ contains no elements of $\bR_{>0}[u_i]_{i\in V}$.
\end{definition}

In other words, $\Trop_{> 0} I$ is formed by those $C(I,J)$ for which every nonzero polynomial in $J$ has both positive and negative coefficients. In particular, let $X \subset \bC^V$ be the zero set of $I$ and let $X_{>0} = X \cap \bR_{>0}^V$ be its totally positive part. One sees that if $X_{>0}$ is nonempty, then $I\cap\bR_{>0}[u_i]_{i\in V}=0$, hence $\Trop_{> 0} I$ is nonempty. The converse is not true in general.

The above observation motivates us to generalize the notion of positive tropicalization to general sign patterns where a sign pattern is an element of $\{\pm1\}^V$.
\begin{definition}
Consider a sign pattern $\tau$ and the automorphism $\varepsilon_\tau$ of $S$ taking $u_i$ to $\tau_i u_i$. The \textit{signed tropicalization} $\Trop_\tau I$ (also $\Trop_\tau X$) is the positive tropicalization $\Trop_{> 0} \varepsilon_\tau(I)$.
\end{definition}

Note that the tropicalizations of $\varepsilon_\tau(I)$ and $I$ are the same, hence $\Trop_\tau I$ is also a subfan of $\Trop I$. Furthermore, one sees that $\Trop_\tau I$ is nonempty if the sign pattern $\tau$ \textit{occurs} in $X(\bR)=X\cap\bR^V$, which means that there exists a point $x\in X(\bR)$ such that $\sgn x_i=\tau_i$ for all $i$.


\section{Phylogenetic trees and dual associahedra}\label{phyltrees}

We overview the definition and basic properties of the complex (or space) of phylogenetic trees constructed in \cite{RW,BHV}. Choose an integer $n \ge 3$. 

\begin{definition}
A \textit{phylogenetic tree} with $n$ leaves is a tree $T$ with no vertices of degree two together with a bijection $\varphi$ from $[n]$ to the set of leaves of $T$. For $i \in [n]$ the leaf $\varphi(i)$ is said to be labeled by $i$. We identify two phylogenetic trees $(T_1,\varphi_1)$ and $(T_2,\varphi_2)$ if there is a graph isomorphism between $T_1$ and $T_2$ mapping $\varphi_1(i)$ to $\varphi_2(i)$ for all $i\in[n]$. 
\end{definition}

Phylogenetic trees naturally arise from subdivisions of polygons. First, we recall the following notion: a \textit{dihedral ordering} of $[n]$ is a permutation of $[n]$ considered up to compositions of cyclic shifts and reversals. In other words, a dihedral ordering is a way of labeling the edges of a regular $n$-gon by elements of $[n]$, considered up to the natural action of the dihedral group. It is straightforward to show that there are $(n-1)!/2$ dihedral orderings of $[n]$.

\begin{wrapfigure}{r}{3.5cm}
    \includegraphics{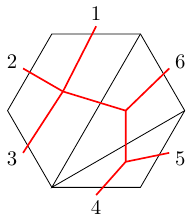}
\end{wrapfigure}
Choose a dihedral ordering $\alpha$, and consider a convex $n$-gon $P$ with edges labeled in accordance with $\alpha$. Every subdivision of $P$ defines a phylogenetic tree as follows. First consider the adjacency graph of the 2-dimensional cells of the subdivision, this is a tree. Then, for the cell $C$ containing the edge of $P$ labeled by $i\in [n]$ add a leaf adjacent to the vertex corresponding to $C$, label this leaf by $i$ (see figure on the right). The result is a phylogenetic tree with $n$ leaves. We say that a phylogenetic tree is \textit{compatible} with $\alpha$ if it arises from a subdivision of $P$ in the above way.


There are $n(n-3)/2$ coarsest nontrivial subdivisions of $P$: those formed by a single diagonal. The corresponding phylogenetic trees are those that have exactly two non-leaf vertices. Specifically, a diagonal partitions the edge labels of $P$ into two subsets $A$ and $B$ with $|A|,|B| \ge 2$. In the corresponding tree one non-leaf vertex is adjacent to the leaves labeled by elements of $A$ and the other to the leaves labeled by elements of $B$.
\begin{definition}
We define a simplicial complex $\Delta(\alpha)$ whose vertex set consists of the phylogenetic trees that are compatible with $\alpha$ and have exactly two non-leaf vertices. A subset of vertices forms a face if and only if the corresponding subdivisions of $P$ have a common refinement or, equivalently, if the corresponding diagonals are pairwise non-crossing (we say that two diagonals cross if they are distinct and share interior points).
\end{definition}

We see that the faces of $\Delta(\alpha)$ are in bijection with all subdivisions of $P$ and, therefore, with all phylogenetic trees compatible with $\alpha$.\footnote{
We consider the empty set to be a face of any simplicial complex, here it corresponds to the trivial subdivision and the phylogenetic tree with a single non-leaf vertex.}
This is a flag complex of pure dimension $n-4$ (recall that a simplicial complex is a \textit{flag complex} if all of its minimal non-faces are of size two).\footnote{
For $n=3$ the complex $\Delta(\alpha)$ is empty and thus has a single face and is of dimension $-1$.}
We refer to $\Delta(\alpha)$ as a \textit{dual associahedron} because its face poset is dual to that of an associahedron. 
For example, if $\alpha$ is any dihedral ordering of five elements, $\Delta(\alpha)$ is a 5-cycle.

We denote $\Delta(n)=\Delta(\alpha)$ for $\alpha$ the dihedral ordering given by the identity permutation. Obviously, for a fixed $n$ every $\Delta(\alpha)$ is isomorphic to $\Delta(n)$, however, we view the various $\Delta(\alpha)$ as distinct complexes because they have distinct vertex sets. It is important to note that these vertex sets are not necessarily disjoint since a phylogenetic tree can be realized in the plane in different ways, meaning that it will be compatible with multiple dihedral orderings. In particular, this allows for the following definition. 

\begin{definition}
    \label{coptdef}
    The \textit{complex of phylogenetic trees} $\Theta(n)$ is the union of $\Delta(\alpha)$ over all $\alpha$, with vertices represented by the same phylogenetic trees identified. It is the simplicial complex whose vertex set consists of all phylogenetic trees on $n$ leaves that have exactly two non-leaf vertices. A subset of vertices forms a face if and only if it forms a face in one of the $\Delta(\alpha)$.
\end{definition} 
The faces of $\Theta(n)$ are in bijection with all phylogenetic trees with $n$ leaves which explains the terminology. Evidently, $\Theta(n)$ is also a flag complex of pure dimension $n-4$. The fan over this simplicial complex is known as the \textit{space of phylogenetic trees}, a term introduced in~\cite{BHV} (occasionally $\Theta(n)$ is referred to by the same name but we distinguish between the two). For example, $\Theta(5)$ is the Petersen graph, see \cite[FIG.\ 13]{BHV}. The Petersen graph contains 12 copies of the 5-cycle, and these are precisely the 12 subcomplexes $\Delta(\alpha)$ corresponding to the 12 dihedral orderings. 


To conclude this section we explain how containment of faces of $\Theta(n)$ can be read off from the corresponding trees. For a phylogenetic tree $(T,\varphi)$ and a non-leaf edge $e$ of $T$ we may consider the phylogenetic tree $(T/e,\psi\circ\varphi)$ where $\psi$ is the natural bijection from the leaf set of $T$ to that of the contraction $T/e$. We say that $(T/e,\psi\circ\varphi)$ is obtained from $(T,\varphi)$ by contracting $e$. 

As above, consider a convex $n$-gon $P$ with edges enumerated by $[n]$. In terms of the correspondence between subdivisions of $P$ and phylogenetic trees, the contraction of a non-leaf edge simply corresponds to the deletion of the respective diagonal from the subdivision. This gives us the following criterion, where $F(T,\varphi)$ denotes the face of $\Theta(n)$ corresponding to $(T,\varphi)$.

\begin{proposition}\label{contraction}
A face $F(T_1,\varphi_1)$ of $\Theta(n)$ is contained in another face $F(T_2,\varphi_2)$ if and only if $(T_1,\varphi_1)$ can be obtained from $(T_2,\varphi_2)$ by a series of contractions of non-leaf edges.
\end{proposition}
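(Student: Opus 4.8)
The plan is to run both implications through the dictionary between subdivisions of a polygon and phylogenetic trees, using the fact recalled just before the statement that contracting a non-leaf edge corresponds to deleting the associated diagonal. First I would fix a dihedral ordering $\alpha$ compatible with $(T_2,\varphi_2)$ and let $S_2$ be the corresponding subdivision of the convex $n$-gon $P$. Recall that the vertices of $\Delta(\alpha)$ are in bijection with the diagonals of $P$, and that the vertices of the face $F(T_2,\varphi_2)$ are precisely the single-diagonal trees attached to the diagonals of $S_2$ — equivalently, the trees obtained from $(T_2,\varphi_2)$ by contracting all non-leaf edges but one. Throughout we freely identify, for this fixed $\alpha$, each vertex of $\Delta(\alpha)$ with its diagonal of $P$.

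\emph{(If.)} Suppose $(T_1,\varphi_1)$ is obtained from $(T_2,\varphi_2)$ by a series of contractions of non-leaf edges. Since a contraction amounts to deleting the respective diagonal, iterating this shows that $(T_1,\varphi_1)$ is compatible with $\alpha$ and that its subdivision $S_1$ satisfies $S_1 \subseteq S_2$. Hence the vertex set of $F(T_1,\varphi_1)$ is contained in that of $F(T_2,\varphi_2)$; as $F(T_1,\varphi_1)$ is itself a face of $\Theta(n)$, containment of vertex sets gives $F(T_1,\varphi_1) \subseteq F(T_2,\varphi_2)$.

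\emph{(Only if.)} Suppose $F(T_1,\varphi_1) \subseteq F(T_2,\varphi_2)$. Since $F(T_2,\varphi_2)$ is a face of the simplicial complex $\Delta(\alpha)$, every subset of its vertex set is again a face of $\Delta(\alpha)$; in particular the vertices of $F(T_1,\varphi_1)$ form a set $S_1 \subseteq S_2$ of pairwise non-crossing diagonals, which defines a subdivision of $P$. Let $(T',\varphi')$ be the phylogenetic tree obtained from $S_1$. On the one hand, passing from $S_2$ to $S_1$ deletes exactly the diagonals in $S_2 \setminus S_1$, so $(T',\varphi')$ is obtained from $(T_2,\varphi_2)$ by contracting the corresponding non-leaf edges. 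On the other hand, the face $F(T',\varphi')$ has vertex set $S_1$, which is exactly the vertex set of $F(T_1,\varphi_1)$; since the faces of $\Theta(n)$ are in bijection with phylogenetic trees on $n$ leaves, this forces $(T',\varphi') = (T_1,\varphi_1)$, completing the argument.

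The one point that needs care — and the step I expect to be the real obstacle — is the passage from abstract phylogenetic trees (defined only up to isomorphism) to a single fixed planar model: one must commit to one dihedral ordering $\alpha$, necessarily compatible with the larger tree $T_2$, and verify that the smaller tree $T_1$ together with every vertex of its face is visible \emph{inside} the subdivision $S_2$ of $P$. This is precisely what the identification ``vertices of $\Delta(\alpha)$ $=$ diagonals of $P$'' (bijective for fixed $\alpha$) and the elementary observation ``a subset of a face of $\Delta(\alpha)$ is a face of $\Delta(\alpha)$'' are doing for us; everything else is a routine translation through the subdivision dictionary.
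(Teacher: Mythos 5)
Your argument is correct and is essentially the paper's own: the paper deduces Proposition~\ref{contraction} directly from the observation that contracting a non-leaf edge corresponds to deleting the associated diagonal, which is exactly the dictionary you run in both directions (your extra care about fixing a compatible $\alpha$ and invoking the bijection between faces of $\Theta(n)$ and trees just makes explicit what the paper leaves implicit).
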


\section{Binary geometries}

We give an abstract definition for binary geometries, following~\cite{L}. For two vertices $i$ and $j$ in a simplicial complex we write $i\sim j$ to denote that $\{i,j\}$ is a face of the complex.

\begin{definition}
Consider a flag complex $\Delta$ with vertex set $V$ together with a \textit{compatibility degree} $a_{i,j}\in\bZ_{>0}$ for every pair $i,j\in V$ such that $i\not\sim j$. We say that the affine subvariety $\wt U\subset \bC^V$ cut out by the \textit{$u$-equations}
\begin{equation}\label{uequation}
R_i:=u_i+\prod_{j\not\sim i} u_j^{a_{i,j}}-1=0,\quad i\in V    
\end{equation}\label{binarydef}
is a \textit{binary geometry} (defined by $\Delta$ and the values $a_{i,j}$) if the following condition is satisfied. For a subset $S\subset V$ the intersection $\wt U_S=\wt U\cap \bC^{V\bs S}$ is nonempty if and only if $S$ is a face of $\Delta$ and, in this case, $\widetilde U_S$ is irreducible of dimension $\dim\Delta+1-|S|$. Here $\bC^{V\bs S}\subset\bC^V$ denotes the subspace of points satisfying $u_i=0$ for all $i\in S$.
\end{definition}

\begin{example}
    Let $\Delta$ be the 4-cycle with vertices $1$, $2$, $3$, $4$ and edges $\{1,2\}$, $\{2,3\}$, $\{3,4\}$, $\{4,1\}$. With all compatibility degrees 1, the resulting variety $\wt U$ is a binary geometry: the 2-plane in 4-space defined by $u_1 + u_3 = 1$ and $u_2 + u_4 = 1$.
\end{example}

Note that $\wt U_\varnothing=\wt U$, hence, $\wt U$ is irreducible of dimension $\dim\Delta+1$. Somewhat less trivially, one may show that if $F$ is a facet then $\wt U_F$ is a point. This follows from the fact that if $\wt U$ is a binary geometry, then $\Delta$ is pure (\cite[Corollary 2.7]{L}). Indeed, for every facet $F$ we have $|F|=\dim\Delta+1$ and, consequently, $\dim\wt U_F = 0$.

For $\wt U$ a binary geometry the symbol $U$ without a tilde will be used to denote the very affine part of $\wt U$: the intersection of $\wt U$ with the coordinate torus $(\bC^*)^V$. Similarly, $U_F=\wt U_F\cap (\bC^*)^{V \bs F}$ for a face $F$. One sees that $U_F$ is an open subset of $\wt U_F$. Furthermore, $\wt U_F$ is the disjoint union of the locally closed sets $U_G$ over all $G\supset F$. In particular, the collection of all $U_F$ forms a stratification of $\wt U$.

\section{$\cM_{0,n}$ and its tropicalization}
\label{M0nsec}

In this section, we recall the basic properties of $\cM_{0,n}$ and overview the known results concerning its tropicalization. 

As before, consider $n\ge 3$. We first define $\cM_{0,n}$ as the (very affine part of the) binary geometry defined by $\Delta(n)$ and then discuss more classical definitions. The vertices of $\Delta(n)$ can be enumerated by the set $D$ of pairs $(i,j)$ with $1\le i<j\le n$ and $i-j$ is not equal to $\pm1$ modulo $n$. Indeed, every vertex of $\Delta(n)$ corresponds to a diagonal of the $n$-gon $P$ with edges labeled in accordance with the ordering $(1,\dots,n)$. Let $i\in[n]$ denote the vertex of $P$ that is adjacent to the edges with labels $i$ and $i+1$ (the latter considered modulo $n$); denote by $d_{i,j}$ the diagonal connecting vertices $i$ and $j$. Evidently, $(i,j)\in D$. Hence, the binary geometry of $\Delta(n)$ can be viewed as a subvariety of the $n(n-3)/2$-dimensional affine space $\bC^D$. Applying Definition~\ref{binarydef} we obtain the following.

\begin{definition}
    \label{def:M0n-bg}
    $\wt\cM_{0,n}$ is the binary geometry defined by $\Delta(n)$ with all compatibility degrees equal to $1$. Explicitly, $\wt\cM_{0,n}$ is the affine subvariety of $\bC^D$ cut out by the $u$-equations
    $$R_{i,j} := u_{i,j} + \prod_{d_{k, l}\text{ crosses }d_{i,j}} u_{k,l} ~- 1=0,\quad (i,j)\in D.$$
    The ideal $I_n\subset\bC[u_{i,j}]_{(i,j)\in D}$ generated by the $R_{i,j}$ is the defining ideal of $\wt\cM_{0,n}$. The very affine variety $\cM_{0,n}$ is the open part $\wt\cM_{0,n}\cap(\bC^*)^D$.
\end{definition}

It is also true that the ideal $I_n$ is prime, i.e.\ it is the entire vanishing ideal of $\wt\cM_{0,n}$. Below we write $\Trop\cM_{0,n}$ to refer to the tropicalization of this ideal: the fan $\Trop I_n$ in the ambient real space $\bR^D$.

Classically, $\cM_{0,n}$ is defined as the moduli space of (ordered) $n$-tuples of distinct points in $\bP^1$ considered up to projective transformations. Arranging the $n$ points into a $2 \times n$ matrix lets one view $\cM_{0,n}$ as the quotient of $\Gr^\circ(2,n)$ (the Grassmannian of lines in $\PP^n$ with non-vanishing Pl\"ucker coordinates) by the torus action of $(\bC^\ast)^n$.

Here recall that for a point in the complex Grassmannian $\Gr(2,n)$ represented by a $2\times n$ matrix, its Pl\"ucker coordinates are the $2\times 2$ minors of the matrix. In particular, this realizes $\Gr(2,n)$ as a subvariety in $\bP(\bC^{n\choose 2})$. This subvariety is cut out by the \textit{Pl\"ucker ideal} $J_n\subset\bC[p_{i,j}]_{1\le i<j\le n}$. Thus, $\Gr^\circ(2,n)$ is the very affine part $\Gr(2,n)\cap(\bC^*)^{n\choose 2}$. We also have an embedding $(\bC^*)^n\hookrightarrow(\bC^*)^{n\choose 2}$ taking the point $(t_i)_{i\in[n]}$ to $(t_it_j)_{1\le i<j\le n}$. Since $(\bC^*)^{n\choose 2}$ naturally acts on $\bP(\bC^{n\choose 2})$, this embedding defines an action of $(\bC^*)^n$ on the same space. This $(\bC^*)^n$-action preserves $\Gr(2,n)$ and $\Gr^\circ(2,n)$, it is the torus action mentioned above. 

Furthermore, the defined $n$-dimensional subtorus of $(\bC^*)^{n\choose 2}$ is the largest subtorus preserving $\Gr(2,n)$. This means that the tropicalization of this subtorus is precisely the lineality space of $\Trop \Gr^\circ(2,n)=\Trop J_n$. We denote this $n$-dimensional lineality space by $L$. We deduce that $\Trop \cM_{0,n}$ is the quotient of $\Trop \Gr^\circ(2,n)$ modulo $L$ (as is well-known, see, for instance, \cite[Theorem 3.5]{GKM}). We denote the respective quotient map from $\bR^{n\choose 2}$ to $\bR^D$ by $q$. 

Definition \ref{def:M0n-bg} of $\cM_{0,n}$ is consistent with the definition of $\cM_{0,n}$ as a quotient of $\Gr^\circ(2,n)$, see \cite{Bro}. 
Given a point in $\cM_{0,n}$, i.e.\ an $n$-tuple $(x_1, \dots, x_n)$ of points in $\bP^1$, the coordinate $u_{i,j}$ is equal to the cross-ratio $(x_i,x_{i+1};x_j,x_{j+1})$. Therefore, the projection from $\Gr^\circ(2,n)$ to $\cM_{0,n}$ is given by the~formula
\begin{equation}
    u_{i,j} = \frac{p_{i,j+1} p_{i+1,j}}{p_{i,j} p_{i+1,j+1}}.
\end{equation}
Consequently, the quotient map $q$ from $\Trop \Gr^\circ(2,n)$ to $\Trop \cM_{0,n}$ is given by 
\begin{equation}
    \label{eqn:proj-trop}
    q(w)_{i,j} = w_{i,j+1} + w_{i+1,j} - w_{i,j} - w_{i+1,j+1},
\end{equation}
where the coordinates in the ambient real spaces of $\Trop \Gr^\circ(2,n)$ and $\Trop \cM_{0,n}$ are indexed by $\{(i,j)\}_{1 \le i < j \le n}$ and by $D$ respectively. 


A famous result of~\cite{SpSt} realizes $\Trop \Gr^\circ(2,n)$ as the product of the space of phylogenetic trees with an $n$-dimensional real space. Let $(T, \varphi)$ be a phylogenetic tree on $n$ leaves with set of non-leaf edges $E$. A \textit{realization} of $(T, \varphi)$ is a choice of lengths for the non-leaf edges given by a vector $l=(l_e)_{e\in E}$ in $\RR^E_{\ge0}$. Every realization induces a (pseudo)metric on $[n]$ by setting the distance $d_{(T,\varphi),l}(i,j)$ to be the sum of $l_e$ over the non-leaf edges in the unique path between $\varphi(i)$ and $\varphi(j)$ in $T$. The cone corresponding to $(T, \varphi)$, denoted $C^\prime_{T, \varphi}$, is a simplicial cone of dimension $|E|$ whose points encode the distance functions on $[n]$ that arise from realizations of $T$. More precisely,
\begin{equation*}
    C^\prime_{T, \varphi} = \{ (d_{(T,\varphi),l}(i,j))_{1\le i<j\le n} \mid l \in \RR^E_{\ge0} \} \subset \RR^{\binom{n}{2}}.
\end{equation*}

It is clear that the facets of $C^\prime_{T,\varphi}$ are precisely those cones $C^\prime_{T',\varphi'}$ for which $(T',\varphi')$ is obtained from $(T,\varphi)$ by contracting a non-leaf edge. This shows that together all of the cones $C^\prime_{T,\varphi}$ form a fan $\cT$. In view of Proposition~\ref{contraction}, we also have an isomorphism between the face posets of $\cT$ and $\Theta(n)$.\footnote{
For a polyhedral fan we do not consider the empty set to be a face.}
Combinatorially, $\cT$ is a fan over $\Theta(n)$, i.e.\ the space of phylogenetic trees.

Up to a sign (which is due to our choice of the min-convention) and lineality space, $\cT$ is the tropical Grassmannian. Here note that every cone $C^\prime_{T,\varphi}$ is transversal to $L$, hence we indeed obtain a fan of pure dimension $2n-3=\dim J_n$.

\begin{theorem}[{\cite[Theorems 3.4, 4.2]{SpSt}}]
    \label{thm:tropGr2n}
    The tropical Grassmannian $\Trop \Gr^\circ(2,n)$ consists of the cones $-C'_{T,\varphi}+L$ for $(T,\varphi)$ ranging over all phylogenetic trees with $n$ leaves.
\end{theorem}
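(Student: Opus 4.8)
The plan is to prove this by a combination of the fan-structure analysis of $\Trop J_n$ together with an explicit matching between cones of $\mathcal{T}$ and initial ideals of $J_n$. First I would recall that $\Trop J_n$ is a pure fan of dimension $\dim J_n = 2n-3$ (the Krull dimension of $\bC[p_{i,j}]/J_n$, noting $\Gr(2,n)$ has projective dimension $2n-4$ so its affine cone has dimension $2n-3$), and that it carries the $n$-dimensional lineality space $L$ coming from the torus $(\bC^*)^n \hookrightarrow (\bC^*)^{\binom n2}$. Modding out by $L$, it suffices to show $\Trop \Gr^\circ(2,n)/L$ coincides with the fan $\mathcal{T}/L$ built from the cones $-C'_{T,\varphi}$; since both are pure of dimension $2n-3-n = n-3$ and $\mathcal{T}/L$ is combinatorially a fan over $\Theta(n)$, which has the right dimension, a dimension count plus a containment in one direction will close the argument.

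Next I would carry out the key computational step: for a trivalent (maximal) phylogenetic tree $(T,\varphi)$, verify directly that a generic point $w$ in the relative interior of $-C'_{T,\varphi}$ is a tree metric and that the associated initial ideal $\init_w J_n$ is precisely the ideal generated by the binomials $p_{i,k}p_{j,l} - p_{i,l}p_{j,k}$ for the three-term Plücker relations whose ``quartet'' $\{i,j,k,l\}$ is split by an internal edge of $T$ as $\{i,j\}|\{k,l\}$ — i.e.\ the relation degenerates to the binomial picking out the two ``non-crossing'' monomials determined by the topology of $T$. This uses the classical four-point condition: a metric arises from a tree realization iff for every four leaves the maximum of the three pairwise sums $d(i,j)+d(k,l)$, $d(i,k)+d(j,l)$, $d(i,l)+d(j,k)$ is attained twice, and the unique strictly smaller sum identifies the tree quartet. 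One then checks that the resulting binomial ideal is prime of the correct dimension $2n-3$ — for instance by exhibiting it as a toric ideal, or by using that $\mathcal{T}$ is a fan whose maximal cones are simplicial of the right dimension and invoking that $\Trop J_n$ is pure. I would also verify the valuative/tropical-basis point: the three-term Plücker relations form a tropical basis for $J_n$ (this is in \cite{SpSt}), so $w \in \Trop J_n$ iff $w$ satisfies the min-attained-twice condition for every quartet, which is exactly the four-point condition and hence exactly the statement that $-w$ (accounting for the min-convention) is a tree metric, i.e.\ $w \in \bigcup_{T,\varphi}(-C'_{T,\varphi}+L)$.

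Having both inclusions of underlying sets, the final step is to upgrade this to an equality of fans: show that the cone $-C'_{T,\varphi}+L$ is exactly a cone $C(J_n, \init_w J_n)$ for $w$ in its relative interior, and that distinct faces give distinct initial ideals, so that the face poset of $\Trop J_n$ matches that of $\mathcal{T}$. Here Proposition \ref{contraction} and the remark that facets of $C'_{T,\varphi}$ correspond to edge-contractions do the combinatorial bookkeeping: contracting an internal edge of $T$ passes to a larger initial ideal in which one of the three-term relations is no longer degenerated to a binomial, matching the poset of $\Theta(n)$. The main obstacle I expect is the honest verification that the candidate binomial initial ideals are radical (indeed prime) of the correct dimension and that they are genuinely the initial ideals $\init_w J_n$ rather than merely contained in them — equivalently, ruling out that $\Trop J_n$ is strictly larger. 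This is handled cleanly by the tropical-basis statement for the three-term Plücker relations together with the four-point condition, so the crux is really to invoke or reprove that tropical-basis fact; everything else is combinatorial manipulation of tree metrics and purity/dimension counting.
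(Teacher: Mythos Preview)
The paper does not give its own proof of this theorem: it is quoted verbatim as \cite[Theorems~3.4,~4.2]{SpSt} and used as a black box. So there is nothing in the paper to compare your argument against.

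That said, your sketch is essentially the Speyer--Sturmfels argument itself. The two pillars you identify --- that the three-term Pl\"ucker relations form a tropical basis for $J_n$, and that the ``minimum attained twice'' condition on these trinomials is exactly the four-point condition characterizing tree metrics --- are precisely the content of \cite[Theorems~3.4 and~4.2]{SpSt}. Your dimension count ($2n-3$ for the affine cone over $\Gr(2,n)$, $n$ for the lineality space $L$, $n-3$ for the maximal cones of $\mathcal T$) is correct, and your plan to upgrade the set-theoretic equality to a fan equality by matching initial ideals to edge-contractions via Proposition~\ref{contraction} is the right bookkeeping. The one place where you are honest about the gap --- the tropical-basis property of the three-term relations --- is indeed the nontrivial input, and in \cite{SpSt} it is proved by a Buchberger-type computation showing that any $S$-polynomial reduction stays within the three-term relations; you would need to either reproduce that or simply cite it, as the paper does.
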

 

As discussed above, $\Trop \cM_{0,n}$ is equal to $\Trop \Gr^\circ(2,n)$ modulo its lineality space $L$, hence it is also combinatorially equivalent to the space of phylogenetic trees. In other words, $\Trop \cM_{0,n}$ is formed by the cones $-q(C'_{T, \varphi})$. We apply~\eqref{eqn:proj-trop} and write these cones out explicitly using the shorthand $d=d_{(T,\varphi),l}$:
\begin{equation}\label{CTphiA}
C_{T,\varphi}=\{ (d(i,j)+d(i+1,j+1)-d(i+1,j)-d(i,j+1))_{(i,j)\in D} \mid l \in \RR^E_{\ge0} \}.    
\end{equation}

\begin{theorem}
    \label{thm:tropM0n}
    The tropicalization $\Trop \cM_{0,n}$ consists of the cones $C_{T,\varphi}$ for $(T,\varphi)$ ranging over all phylogenetic trees with $n$ leaves.
\end{theorem}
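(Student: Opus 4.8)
The plan is to deduce Theorem~\ref{thm:tropM0n} from Theorem~\ref{thm:tropGr2n} together with the fact, recalled above, that $\Trop\cM_{0,n}$ is the image of $\Trop\Gr^\circ(2,n)$ under the quotient map $q\colon\bR^{\binom n2}\to\bR^D$ by the lineality space $L$. Since $q$ is linear with kernel exactly $L$, it sends the fan $\Trop\Gr^\circ(2,n)=\{-C'_{T,\varphi}+L\}$ to the fan whose cones are $q(-C'_{T,\varphi}+L)=-q(C'_{T,\varphi})=C_{T,\varphi}$, where the last equality is just the computation \eqref{CTphiA} obtained by substituting \eqref{eqn:proj-trop}. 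So at the level of sets the statement is almost immediate; the content to be checked is that the $C_{T,\varphi}$ genuinely form the cones of the quotient fan, i.e.\ that no two distinct $C_{T,\varphi}$ coincide or overlap improperly and that the face relations are preserved.

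First I would record that each cone $C'_{T,\varphi}$ is transversal to $L$ (this is stated in the excerpt, right before Theorem~\ref{thm:tropGr2n}), so that $q$ restricted to $C'_{T,\varphi}$ is injective and $\dim C_{T,\varphi}=\dim C'_{T,\varphi}=|E|$. In particular $q$ maps the relative interior of $-C'_{T,\varphi}+L$ bijectively onto the relative interior of $C_{T,\varphi}$, and the same for every face. Next I would invoke the general fact that when a fan $\Sigma$ has a lineality space containing a subspace $L$ and $q$ is the quotient by $L$, the images $q(\sigma)$ form a fan isomorphic as a poset to $\Sigma$ (this is what "$\Trop\cM_{0,n}$ is the quotient of $\Trop\Gr^\circ(2,n)$ modulo $L$'' means, and it is the content of \cite[Theorem 3.5]{GKM} cited above). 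Hence the cones of $\Trop\cM_{0,n}$ are exactly the $q(\tau)$ for $\tau$ a cone of $\Trop\Gr^\circ(2,n)$, i.e.\ exactly the $C_{T,\varphi}$.

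The one genuine thing to verify is that the correspondence $(T,\varphi)\mapsto C_{T,\varphi}$ is a bijection onto the cones, equivalently that $(T,\varphi)\mapsto -C'_{T,\varphi}+L$ is. This follows because the assignment $(T,\varphi)\mapsto C'_{T,\varphi}$ is a bijection onto the maximal cones-through-faces of $\cT$ realizing the face poset of $\Theta(n)$ (discussed in the paragraph preceding Theorem~\ref{thm:tropGr2n} via Proposition~\ref{contraction}), and adding the lineality space $L$ and negating are poset isomorphisms of fans; then $q$ is too by the previous paragraph. I would also spell out, via the explicit formula \eqref{CTphiA}, that the inclusion $C_{T_1,\varphi_1}\subseteq C_{T_2,\varphi_2}$ holds precisely when $(T_1,\varphi_1)$ is a contraction of $(T_2,\varphi_2)$, so that $\Trop\cM_{0,n}$ is combinatorially a fan over $\Theta(n)$ exactly as claimed in the surrounding text.

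I expect the main (and really only) obstacle to be bookkeeping rather than substance: making sure the min/max sign convention is applied consistently (the paper uses min, hence the $-C'_{T,\varphi}$), and being careful that $q$ is well-defined on $\Trop\Gr^\circ(2,n)$ because $L$ is contained in the lineality space of that fan — a point that was established earlier when identifying $L$ as the tropicalization of the $n$-torus, the largest subtorus preserving $\Gr(2,n)$. Once those conventions are pinned down, the proof is a one-line application of Theorem~\ref{thm:tropGr2n} and the quotient description, followed by the substitution \eqref{eqn:proj-trop} to get the closed form \eqref{CTphiA}.
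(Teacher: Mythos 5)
Your proposal is correct and follows essentially the same route as the paper, which treats Theorem~\ref{thm:tropM0n} as an immediate consequence of Theorem~\ref{thm:tropGr2n} together with the identification of $\Trop\cM_{0,n}$ with $\Trop\Gr^\circ(2,n)$ modulo the lineality space $L$ (via \cite[Theorem 3.5]{GKM}) and the explicit projection formula~\eqref{eqn:proj-trop}, yielding the cones $-q(C'_{T,\varphi})=C_{T,\varphi}$ of~\eqref{CTphiA}. Your additional checks on transversality to $L$ and preservation of the face poset are exactly the points the paper disposes of in the surrounding discussion.
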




We now turn to the discussion of sign patterns and signed tropicalizations of $\cM_{0,n}$. This discussion is kept brief since it is logically independent from our results. Theorem~\ref{thm:tropM0n-pos} is only needed as a prototype that illustrates our proposed generalization to type C.

First of all, note that an element of the symmetric group $\fS_n$ acts naturally on $\cM_{0,n}$ by permuting the points of the respective $n$-tuple in $\bP^1$ (i.e.\ the columns of the $2\times n$ matrix). With respect to the coordinates $u_{i,j}$ every permutation acts by a monomial transformation, see~\cite[Subsection 11.2]{AHL}. This induces an $\fS_n$-action on $\Trop\cM_{0,n}$ by linear transformations of the ambient space $\bR^D$. Furthermore, $\fS_n$ also acts on the set of phylogenetic trees by $\sigma(T,\varphi)=(T,\varphi\circ\sigma^{-1})$ for $\sigma\in\fS_n$. By tracing the definitions one sees that this action is compatible with the action on $\Trop\cM_{0,n}$, i.e.\ $\sigma(C_{T,\varphi})=C_{\sigma(T,\varphi)}$.

There are $(n-1)!/2$ sign patterns occurring in $\cM_{0,n}(\bR)$, which are enumerated by dihedral orderings of $[n]$, again, see~\cite[Subsection 11.2]{AHL}.
For a dihedral ordering $\alpha$ we denote the corresponding sign pattern by $\tau_\alpha$\footnote{
The correspondence $\alpha\mapsto\tau_\alpha$ is not hard to describe. One considers $n$ distinct points $x_1,\dots,x_n$ in the real projective line ordered according to $\alpha$ and sets $(\tau_\alpha)_{i,j}$ equal to $\sgn(x_i,x_{i+1};x_j,x_{j+1})$.} 
and the set of points in $\cM_{0,n}(\bR)$ with coordinates fitting the sign pattern $\tau_\alpha$ by $\cM_{0,n}(\alpha)$. In fact, the subsets $\cM_{0,n}(\alpha)$ are precisely the connected components of $\cM_{0,n}(\bR)$. The natural action of $\fS_n$ on the set of dihedral orderings is compatible with its action on $\cM_{0,n}$: one has $\sigma(\cM_{0,n}(\alpha))=\cM_{0,n}(\sigma\alpha)$ so that the action permutes the connected components transitively. This means that the corresponding signed tropicalizations given by the $\tau_\alpha$ are also permuted by the $\fS_n$-action: $\Trop_{\tau_{\sigma\alpha}}\cM_{0,n}=\sigma(\Trop_{\tau_\alpha}\cM_{0,n})$. 

Finally, $\Trop_{>0}\cM_{0,n}$ is the fan over the dual-associahedron subcomplex $\Delta(n)$: it is formed by those cones $C_{T,\varphi}$ for which $(T,\varphi)$ is compatible with the dihedral ordering $(1,\dots,n)$, see~\cite[Section 3.12.1]{L}. Combining this with the above, we summarize as follows.

\begin{theorem}\label{thm:tropM0n-pos}
There is a bijection $\alpha\mapsto\tau_\alpha$ from the set of dihedral orderings of $[n]$ to the set of sign patterns occurring in $\cM_{0,n}(\bR)$ such that the following holds. For every dihedral ordering $\alpha$ the signed tropicalization $\Trop_{\tau_\alpha}\cM_{0,n}$ is the fan over the dual associahedron $\Delta(\alpha)$ formed by those cones $C_{T,\varphi}$ for which $(T,\varphi)$ is compatible with $\alpha$.
\end{theorem}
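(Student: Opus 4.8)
The statement is a combination of the known description of the ordinary positive tropicalization with an $\fS_n$-equivariance argument, so the plan is to handle the base dihedral ordering first and then transport.

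First I would treat $\alpha_0=(1,\dots,n)$, the dihedral ordering given by the identity permutation. As recalled above following \cite{L}, $\Trop_{>0}\cM_{0,n}$ is the fan over $\Delta(n)=\Delta(\alpha_0)$, namely the subfan formed by the cones $C_{T,\varphi}$ with $(T,\varphi)$ compatible with $\alpha_0$. To make contact with the signed tropicalizations I would then check that $\tau_{\alpha_0}$ is the all-positive sign pattern, so that $\Trop_{\tau_{\alpha_0}}\cM_{0,n}=\Trop_{>0}\cM_{0,n}$: by the footnote description of $\tau_\alpha$ together with the cross-ratio formula for $u_{i,j}$, for points $x_1<\dots<x_n$ on the real affine line each $u_{i,j}$ is a cross-ratio lying strictly between $0$ and $1$, hence every sign is $+1$. (Alternatively this is part of \cite[Subsection 11.2]{AHL}, and the nonemptiness of $\cM_{0,n}(\bR_{>0})$ that it entails is exactly what makes $\Trop_{>0}\cM_{0,n}$ nonempty to begin with.) This establishes the theorem for $\alpha=\alpha_0$.

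Second I would transport to an arbitrary $\alpha$ using the $\fS_n$-action. Since $\fS_n$ acts transitively on itself it acts transitively on dihedral orderings, so choose $\sigma\in\fS_n$ with $\sigma\alpha_0=\alpha$. From $\sigma(C_{T,\varphi})=C_{\sigma(T,\varphi)}$, the elementary equivalence that $(T,\varphi)$ is compatible with $\alpha_0$ precisely when $\sigma(T,\varphi)$ is compatible with $\sigma\alpha_0$, and the resulting identification $\sigma(\Delta(n))=\Delta(\alpha)$ of simplicial complexes, the image under $\sigma$ of the fan over $\Delta(\alpha_0)$ described above is exactly the fan over $\Delta(\alpha)$ formed by the cones $C_{T,\varphi}$ with $(T,\varphi)$ compatible with $\alpha$. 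On the other hand, the equivariance $\sigma(\cM_{0,n}(\alpha_0))=\cM_{0,n}(\alpha)$ gives $\Trop_{\tau_\alpha}\cM_{0,n}=\Trop_{\tau_{\sigma\alpha_0}}\cM_{0,n}=\sigma\bigl(\Trop_{\tau_{\alpha_0}}\cM_{0,n}\bigr)$, as recorded in the preceding paragraph of the text. Combining the two identities proves the theorem for $\alpha$; consistency across different choices of $\sigma$ is automatic because both sides depend only on $\alpha$. Finally, the map $\alpha\mapsto\tau_\alpha$ is well defined and injective because the connected components of $\cM_{0,n}(\bR)$ are exactly the $\cM_{0,n}(\alpha)$, and it is surjective onto the occurring sign patterns because any occurring sign pattern is realized by a real point whose underlying cyclic order of the marked points yields a dihedral ordering mapping to it; both facts come from \cite[Subsection 11.2]{AHL}.

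The step that genuinely requires care — the rest being bookkeeping already set up in the text — is the compatibility of the $\fS_n$-action with signs: that the monomial transformation by which $\sigma$ acts on the coordinates $u_{i,j}$ carries the sign pattern $\tau_{\alpha_0}$ to $\tau_{\sigma\alpha_0}$, equivalently the equivariance $\sigma(\cM_{0,n}(\alpha))=\cM_{0,n}(\sigma\alpha)$. If one wanted a self-contained proof rather than invoking \cite{AHL}, this is where one would have to compute explicitly with the monomial action on the $u$-coordinates and the cross-ratio realization; everything else follows formally.
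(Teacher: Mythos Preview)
Your proposal is correct and follows essentially the same approach as the paper: the paper's proof is the discussion immediately preceding the theorem, which establishes the base case $\Trop_{>0}\cM_{0,n}$ as the fan over $\Delta(n)$ (citing~\cite{L}) and then transports via the $\fS_n$-action using the compatibilities $\sigma(C_{T,\varphi})=C_{\sigma(T,\varphi)}$ and $\Trop_{\tau_{\sigma\alpha}}\cM_{0,n}=\sigma(\Trop_{\tau_\alpha}\cM_{0,n})$, with the bijection $\alpha\mapsto\tau_\alpha$ taken from~\cite{AHL}. Your write-up is slightly more explicit (e.g.\ verifying $\tau_{\alpha_0}=(1,\dots,1)$ via the cross-ratio formula), but the structure and the inputs are the same.
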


\section{The complex of axially symmetric phylogenetic trees}\label{ASsec}

Choose an integer $n\ge 3$ and consider the $2n$-element set $N=[1,n]\cup[-n,-1]$. As in Section~\ref{phyltrees} we may consider phylogenetic trees with leaves labeled by the set $N$ as well as dihedral orderings of the set $N$.

We say that a dihedral ordering of $N$ is \textit{axially symmetric} if it can be represented by an ordering $(\sigma_1,\dots,\sigma_{2n})$ of $N$ such that $\sigma_i=-\sigma_{2n+1-i}$ for any $i\in[2n]$. There are $2^{n-2}n!$ such dihedral orderings. Let $\alpha$ be an axially symmetric dihedral ordering (or ASDO) and consider a regular $2n$-gon $P$ with its edges labeled by $N$ in accordance with $\alpha$. This labeling distinguishes an axis of symmetry $l$ among the longest diagonals of $P$ reflection across which takes the edge labeled by $i$ to the edge labeled by $-i$. 

Furthermore, for every diagonal $d$ of $P$ we have a diagonal $d'$ that is symmetric to $d$ with respect to $l$. The two diagonals $d$ and $d'$ coincide if and only if $d$ is $l$ or is one of the $n-1$ diagonals perpendicular to $l$. This allows us to consider \textit{axially symmetric subdivisions} of $P$: subdivisions which contain a diagonal if and only if they also contain the one symmetric to it. Such a subdivision may not contain diagonals that cross $l$ but are not perpendicular to $l$. 

Similarly to Section~\ref{phyltrees}, a subdivision of $P$ defines a phylogenetic tree with leaves labeled by the set $N$. We call a phylogenetic tree \textit{axially symmetric} if it arises in this way from an ASDO $\alpha$ and an axially symmetric subdivision of $P$. In this case we also say that the resulting tree is \textit{compatible} with $\alpha$. Note that an  axially symmetric phylogenetic tree (ASPT) will be compatible with multiple ASDOs.

There are $(n+2)(n-1)/2$ coarsest nontrivial axially symmetric subdivisions of $P$: the one formed by $l$, those formed by each of the $n-1$ diagonals perpendicular to $l$ and $(n+1)(n-2)/2$ more formed by a pair of diagonals that are symmetric to each other and do not cross $l$. We say that an ASPT compatible with $\alpha$ is \textit{minimal} if it arises from a coarsest nontrivial axially symmetric subdivision of $P$. Such a tree will have two or three non-leaf vertices.

\begin{definition}
We define a simplicial complex $\Delta_\AS(\alpha)$ whose vertex set is the set of minimal ASPTs compatible with $\alpha$. A subset of vertices forms a face if and only if the corresponding axially symmetric subdivisions have a common refinement.
\end{definition}

We see that the faces of $\Delta_\AS(\alpha)$ are enumerated by the axially symmetric subdivisions of $P$ or, alternatively, by the ASPTs that are compatible with $\alpha$.

\begin{proposition}
The simplicial complex $\Delta_\AS(\alpha)$ is isomorphic to the dual associahedron $\Delta(n+2)$.
\end{proposition}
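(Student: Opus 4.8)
The plan is to construct an explicit isomorphism of simplicial complexes between $\Delta_\AS(\alpha)$ and $\Delta(n+2)$ by exhibiting a bijection between their vertex sets that carries faces to faces in both directions. First I would set up a convex $(n+2)$-gon $Q$ whose edges are labeled in a way determined by $\alpha$, and identify a combinatorial correspondence between axially symmetric subdivisions of the $2n$-gon $P$ and arbitrary subdivisions of $Q$. Concretely, quotienting $P$ by the axis of symmetry $l$ should ``fold'' $P$ onto a polygon with $n+1$ sides coming from half of $P$, plus one extra edge recording the axis $l$ itself — giving $n+2$ edges total. The $n-1$ diagonals of $P$ perpendicular to $l$, together with $l$, become the $n$ diagonals of $Q$ emanating from the ``fold vertex,'' while each symmetric pair of diagonals of $P$ not crossing $l$ becomes a single diagonal of $Q$ lying entirely on one side. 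This is just the observation that a diagonal of a $(2n)$-gon symmetric across $l$ is equivalent data to a diagonal of $Q$, so that the set of vertices of $\Delta_\AS(\alpha)$ and the set of diagonals of $Q$ (equivalently, vertices of $\Delta(n+2)$) are in bijection.

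Next I would verify that this bijection respects the face relation. Since both complexes are flag complexes, it suffices to check that two minimal ASPTs compatible with $\alpha$ form a face of $\Delta_\AS(\alpha)$ — i.e.\ the corresponding axially symmetric subdivisions have a common refinement, equivalently the symmetric diagonal-pairs they consist of are pairwise non-crossing in $P$ — if and only if the corresponding diagonals of $Q$ are pairwise non-crossing. Crossing behavior is preserved under the folding map: two symmetric pairs of diagonals of $P$ avoiding $l$ are mutually non-crossing precisely when their images in $Q$ are non-crossing; and a perpendicular-to-$l$ diagonal (or $l$) of $P$ is compatible with such a pair exactly when the corresponding diagonal of $Q$ from the fold vertex doesn't cross the image diagonal. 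I would then handle the interactions among the perpendicular diagonals and $l$ separately — these all pass through or straddle $l$ and correspond to diagonals of $Q$ sharing the fold vertex, so any two of them are automatically non-crossing, matching the fact that diagonals from a common vertex never cross. Once crossing is matched in all cases, flagness upgrades the vertex bijection to a simplicial isomorphism, and dimension count ($\dim\Delta_\AS(\alpha)=n-2$ from the problem's earlier claim versus $\dim\Delta(n+2)=(n+2)-4=n-2$) is an automatic consequence, serving as a sanity check.

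The main obstacle I anticipate is getting the bookkeeping of the folding map exactly right, especially the role of the axis $l$ and the perpendicular diagonals. There is a subtlety in how the labeling of the edges of $Q$ must be chosen as a function of $\alpha$: one needs a clean description of which ASDO $\alpha$ yields which dihedral ordering of $Q$, and to check that $l$ itself should be treated as ``one of the $n$ diagonals through the fold vertex'' rather than as a boundary edge of $Q$ — an off-by-one issue that, if mishandled, gives an $(n+1)$-gon instead of the correct $(n+2)$-gon. A careful way to pin this down is to work directly with the phylogenetic-tree picture: an ASPT has a canonical ``central'' vertex or edge fixed by the symmetry, and collapsing the symmetric halves of the tree onto each other produces a phylogenetic tree on $n+2$ leaves (the $n$ leaves from one half, plus two new leaves recording the two ends of the axis). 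Tracking this tree-level folding and invoking Proposition~\ref{contraction} on both sides would let me phrase the whole argument in terms of edge contractions, sidestepping the planar crossing analysis. I would present whichever of the two formulations (polygon-folding or tree-folding) makes the face-compatibility check most transparent, and relegate the routine verification that crossings are preserved to a brief case analysis.
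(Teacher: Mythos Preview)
Your approach is essentially the same as the paper's: both build an $(n+2)$-gon $Q$ from one half of $P$ together with an extra vertex, and set up a refinement-preserving bijection between axially symmetric subdivisions of $P$ and all subdivisions of $Q$. The paper phrases this not as a fold but as taking the $(n+1)$-gon $Q'$ on one side of $l$ and adjoining a new vertex $v$ adjacent to the two endpoints of $l$; it then simply asserts the resulting map is an order-preserving bijection, whereas you spell out the crossing-compatibility check and the flagness upgrade more carefully.

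One concrete correction to your bookkeeping: in an $(n+2)$-gon a single vertex supports only $n-1$ diagonals, not $n$, so the $n-1$ perpendicular diagonals of $P$ correspond to the diagonals of $Q$ through the extra vertex $v$, while $l$ itself becomes the diagonal of $Q$ joining the two \emph{neighbors} of $v$ (the former edge $l$ of $Q'$, now interior to $Q$). With that adjustment your case analysis of crossings goes through exactly, and the off-by-one worry you flagged is resolved.
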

\begin{proof}
We establish a bijection respecting the order of refinement between the axially symmetric subdivisions of $P$ and all subdivisions of an $(n+2)$-gon. Let $Q'$ be one of the $(n+1)$-gons into which $P$ is divided by $l$. To $Q'$ we add a vertex $v$ adjacent to the endpoints of $l$ to obtain a convex $(n+2)$-gon $Q$. Now choose an axially symmetric subdivision $S$ of $P$ and consider the subdivision $R$ of $Q$ defined as follows. $R$ includes all of the diagonals in $S$ whose endpoints lie in $Q'$ (each such diagonal is either $l$ or a diagonal of $Q'$). In addition, for every diagonal $d$ in $S$ that is perpendicular to $l$ the subdivision $R$ includes the diagonal connecting $v$ with the endpoint of $d$ that lies in $Q'$. One easily checks that this is a bijection with the desired property. An example is shown below.
\end{proof}

\scalebox{.85}{
\begin{minipage}{.37\textwidth}
    \centering
    \begin{tikzpicture}[scale=1/2]
    \foreach \i in {0, 1, 2, 3, 4, 5, 6, 7} {
        \coordinate (a\i) at (45*\i:3);
        \filldraw (a\i) circle (2pt);
        \coordinate (l\i) at ($.55*(45*\i:3) + .55*(45*\i+45:3)$);
    }

    \draw (a0)--(a1)--(a2)--(a3)--(a4)--(a5)--(a6)--(a7)--cycle;

    \draw[dashed] ($1.25*(a1)$)--($1.25*(a5)$);
    \node[right] at ($1.15*(a1)$){$l$};

    \draw (a4)--(a6);
    \draw (a6)--(a1)--(a4);
    \draw (a4)--(a2);
    \draw (a6)--(a0);

    \draw[->] ($1.5*(a0)$)--($2*(a0)$);
\end{tikzpicture}
\end{minipage}%

\begin{minipage}{.37\textwidth}
    \centering
    \begin{tikzpicture}[scale=1/2]
    \foreach \i in {0, 1, 5, 6, 7} {
        \coordinate (a\i) at (45*\i:3);
        \filldraw (a\i) circle (2pt);
        \coordinate (l\i) at ($.55*(45*\i:3) + .55*(45*\i+45:3)$);
    }

    \draw (a5)--(a6)--(a7)--(a0)--(a1)--cycle;

    \draw[dashed] ($1.25*(a1)$)--($1.25*(a5)$);
    \node[right] at ($1.15*(a1)$){$l$};

    \draw (a6)--($.5*(a6) + .5*(a4)$);
    \draw (a6)--(a1);
    \draw (a6)--(a0);


    \draw[thick] (a1)--(a5)--(a6)--(a7)--(a0)--cycle;
    \node[right] at (l7){$Q'$};

    \draw[->] ($1.5*(a0)$)--($2.0*(a0)$);
\end{tikzpicture}
\end{minipage}%

\begin{minipage}{.37\textwidth}
    \centering
    \begin{tikzpicture}[scale=1/2]

    \foreach \i in {0, 1, 5, 6, 7} {
        \coordinate (a\i) at (45*\i:3);
        \filldraw (a\i) circle (2pt);
    }

    \draw (a5)--(a6)--(a7)--(a0)--(a1);
    
    \coordinate (v) at ($.5*(a3)$);
    \filldraw (v) circle (2pt);
    \node[above left] at (v){$v$};

    \draw (v)--(a6);
    \draw (v)--(a1);
    \draw (v)--(a5);
    \draw (a0)--(a6);
    \draw (a1)--(a6);

     \draw[dashed] ($1.25*(a1)$)--($1.25*(a5)$);

    \node[right] at (l7){$Q$};
\end{tikzpicture}
\end{minipage}
}
\vspace{-4mm}
    

Now we can give an analogue of Definition~\ref{coptdef}. Note that, similarly to the phylogenetic trees considered in Section~\ref{phyltrees}, any ASPT is compatible with multiple ASDOs. Hence, the union of the various $\Delta_\AS(\alpha)$ is not disjoint.
\begin{definition}
The \textit{complex of axially symmetric phylogenetic trees} $\Theta_\AS(n)$ is the union of $\Delta_\AS(\alpha)$ over all ASDOs $\alpha$, with vertices represented by the same phylogenetic trees identified. The vertex set of the simplicial complex $\Theta_\AS(n)$ is the set of all minimal ASPTs. A subset of vertices forms a face if and only if it forms a face in one of the $\Delta_\AS(\alpha)$.    
\end{definition}

We see that the faces of $\Theta_\AS(n)$ are enumerated by all ASPTs with leaves labeled by $N$. Since every $\Delta_\AS(\alpha)$ is a flag complex of pure dimension $n-2$, so is $\Theta_\AS(n)$. 

\begin{example}\label{ASPTexample}
For $n = 3$ the complex $\Theta_{\AS}(n)$ is shown in Figure~\ref{C2graph}. It is one-dimensional with 13 vertices and 21 edges. The minimal ASPTs for each vertex are given in Table~\ref{C2trees}. Three of the edges are labeled with the corresponding trees in Figure~\ref{C2graph}. The subcomplex $\Delta_{\AS}(\alpha)$ for $\alpha$ given by $(1,2,3,-3,-2,-1)$ is highlighted in red; it is isomorphic to $\Delta(5)$, i.e.\ a 5-cycle.    
\end{example}

For an ASPT $(T,\varphi)$ (here $\varphi$ is a bijection from $N$ to the leaf set of $T$) let $F(T,\varphi)$ denote the face of $\Theta_\AS(n)$ corresponding to $(T,\varphi)$. A containment of faces $F(T_1,\varphi_1)\subset F(T_2,\varphi_2)$ can be interpreted similarly to Proposition~\ref{contraction}. 

Let $\alpha$, $P$ and $l$ be as above and let $(T,\varphi)$ be compatible with $\alpha$. Since $(T,\varphi)$ arises from an axially symmetric subdivision of $P$, one has an involution $\iota$ on $T$ that corresponds to reflecting the subdivision across $l$. We call $\iota$ the \textit{symmetry} of $(T,\varphi)$ and for an edge $e$ of $T$ we say that $\iota(e)$ is \textit{symmetric} to $e$ (note that we may have $e=\iota(e)$). Alternatively, $\iota$ can be characterized as the unique automorphism of $T$ that exchanges the vertices $\varphi(i)$ and $\varphi(-i)$ for all $i\in N$. This means that the symmetry does not depend on the choice of $\alpha$: if $(T,\varphi)$ is also compatible with $\beta$, then the respective subdivision will provide the same involution.

For a non-leaf edge $e$ of $T$ we may again consider the phylogenetic tree obtained from $(T,\varphi)$ by contracting $e$, but the result might not be axially symmetric. However, if we contract both $e$ and $\iota(e)$ (or just $e$ if it is symmetric to itself), then what we obtain is axially symmetric. We call such an operation a \textit{symmetric edge contraction}. Since it corresponds to the deletion of a pair of mutually symmetric diagonals from the subdivision (or of a single diagonal symmetric to itself), we have the following.
\begin{proposition}\label{contractionC}
A face $F(T_1,\varphi_1)$ of $\Theta_\AS(n)$ is contained in another face $F(T_2,\varphi_2)$ if and only if $(T_1,\varphi_1)$ can be obtained from $(T_2,\varphi_2)$ by a series of symmetric edge contractions.
\end{proposition}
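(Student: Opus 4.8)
The plan is to mimic the proof of Proposition~\ref{contraction} but carry it out inside the fixed polygon picture, using the bijection between axially symmetric subdivisions and ASPTs that was already established. First I would reduce the "if" direction to a single symmetric edge contraction: if $(T_1,\varphi_1)$ is obtained from $(T_2,\varphi_2)$ by contracting a pair $\{e,\iota(e)\}$ (or a single self-symmetric edge), then pick an ASDO $\alpha$ with which $(T_2,\varphi_2)$ is compatible, realize it by an axially symmetric subdivision $S_2$ of the $2n$-gon $P$, and observe that deleting the pair of diagonals dual to $e$ and $\iota(e)$ yields an axially symmetric subdivision $S_1$ whose associated ASPT is exactly $(T_1,\varphi_1)$. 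Since $S_1$ is coarser than $S_2$, both subdivisions are faces of $\Delta_\AS(\alpha)$ with $S_1 \subset S_2$ as sets of diagonals, so $F(T_1,\varphi_1)\subset F(T_2,\varphi_2)$ in $\Delta_\AS(\alpha)$ and hence in $\Theta_\AS(n)$. Iterating gives the general "if" statement. One subtlety to spell out here: I must check that the diagonals dual to $e$ and to $\iota(e)$ really are symmetric to each other (respectively, that a self-symmetric non-leaf edge is dual to a diagonal perpendicular to $l$ or to $l$ itself); this follows from the characterization of $\iota$ as reflection of the subdivision across $l$, which is independent of the choice of $\alpha$.

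For the "only if" direction, suppose $F(T_1,\varphi_1)\subset F(T_2,\varphi_2)$ in $\Theta_\AS(n)$. By the definition of $\Theta_\AS(n)$ as the union of the $\Delta_\AS(\alpha)$, there is a single ASDO $\alpha$ such that both faces lie in $\Delta_\AS(\alpha)$ and the containment holds there; via the isomorphism $\Delta_\AS(\alpha)\cong\Delta(n+2)$ (the Proposition just proved), or directly from the definition of $\Delta_\AS(\alpha)$, the corresponding axially symmetric subdivisions $S_1,S_2$ of $P$ satisfy $S_1\subseteq S_2$ as sets of diagonals, i.e.\ $S_2$ refines $S_1$. Now remove the diagonals of $S_2\setminus S_1$ one symmetric pair (or self-symmetric diagonal) at a time; each such removal is, by the "if" analysis above, dual to a symmetric edge contraction, and after all removals one reaches the ASPT dual to $S_1$, namely $(T_1,\varphi_1)$. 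This exhibits $(T_1,\varphi_1)$ as the result of a series of symmetric edge contractions applied to $(T_2,\varphi_2)$.

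The main obstacle, as usual for such statements, is not the logical skeleton but the bookkeeping connecting the three descriptions of the same object — a face of $\Theta_\AS(n)$, an axially symmetric subdivision of $P$, and an ASPT — and in particular verifying that the involution $\iota$ really is well defined independently of $\alpha$ (so that "symmetric edge contraction" is an intrinsic notion) and that it matches reflection of subdivisions across $l$. This is exactly the content of the paragraph preceding the statement, so I would cite that discussion rather than redo it; what remains is the short induction on the number of removed diagonal pairs, together with the observation that $S_2\setminus S_1$ can always be removed pair-by-pair while staying axially symmetric (true because each pair is removable independently, the set of diagonals of $S_1$ being pairwise non-crossing with everything in $S_2$). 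I expect the write-up to be only a few lines once Proposition~\ref{contraction} and the preceding paragraph are invoked.
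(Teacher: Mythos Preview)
Your plan is correct and matches the paper's approach: the paper's entire argument is the one-sentence observation preceding the proposition, namely that a symmetric edge contraction corresponds to deleting a pair of mutually symmetric diagonals (or a single self-symmetric diagonal) from the subdivision. Your proposal is a careful unpacking of exactly this correspondence in both directions, so there is nothing to add.
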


In particular, one may check that the above holds for $F(T_2,\varphi_2)$ one of the labeled edges in Figure~\ref{C2graph} and $F(T_1,\varphi_1)$ one of its endpoints.

Next, for $(T,\varphi)$ and $\iota$ as above let $E$ denote the set of non-leaf edges in $T$ and let $k(T,\varphi)$ be the number of $\iota$-orbits in $E$. In other words, 
\[k(T,\varphi)=|E|-|\{e\in E| \iota(e)\neq e\}|/2.\] 
The previous proposition has the following consequence.
\begin{corollary}\label{facedimC}
The dimension of the face $F(T,\varphi)$ is $k(T,\varphi)-1$.        
\end{corollary}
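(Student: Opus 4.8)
The plan is to deduce Corollary~\ref{facedimC} from Proposition~\ref{contractionC} by a simple dimension count on the face poset. First I would recall that for a simplicial complex the dimension of a face $F$ equals (number of vertices of $F$) $-1$, and that the vertices of $F(T,\varphi)$ are precisely the minimal ASPTs $(T',\varphi')$ whose corresponding face is contained in $F(T,\varphi)$. By Proposition~\ref{contractionC}, such $(T',\varphi')$ are exactly the trees obtained from $(T,\varphi)$ by symmetric edge contractions that leave a \emph{minimal} ASPT, i.e.\ one with two or three non-leaf vertices. Equivalently, these are obtained by contracting all non-leaf edges of $T$ except for one $\iota$-orbit. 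Hence the vertices of $F(T,\varphi)$ are in bijection with the $\iota$-orbits of non-leaf edges of $T$: each orbit (a single self-symmetric edge, or a pair $\{e,\iota(e)\}$) gives exactly one minimal ASPT obtained by contracting everything else.

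The key steps, in order, are: (1) translate "vertex of $F(T,\varphi)$" into "minimal ASPT contained in $F(T,\varphi)$" using that $\Theta_\AS(n)$ is a simplicial complex and the vertex set of a face is the set of its $0$-dimensional subfaces; (2) invoke Proposition~\ref{contractionC} to characterize these as the ASPTs obtained from $(T,\varphi)$ by a series of symmetric edge contractions; (3) observe that a minimal ASPT has exactly one non-leaf edge up to the symmetry $\iota$ (the coarsest nontrivial axially symmetric subdivisions are exactly those with one self-symmetric diagonal or one symmetric pair of diagonals), so the minimal ASPTs below $F(T,\varphi)$ correspond bijectively to $\iota$-orbits of non-leaf edges of $T$; (4) conclude that the number of vertices of $F(T,\varphi)$ equals $k(T,\varphi)$, the number of $\iota$-orbits in $E$, so $\dim F(T,\varphi) = k(T,\varphi) - 1$. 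One should also double-check the degenerate case where $T$ already has two or three non-leaf vertices (so $(T,\varphi)$ is itself minimal or a single edge): here $k(T,\varphi)=1$ or $2$ respectively, matching a vertex or an edge of $\Theta_\AS(n)$.

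The main point requiring care---and the only real obstacle---is step (3): verifying that contracting all non-leaf edges of $T$ outside a fixed $\iota$-orbit yields a \emph{valid} minimal ASPT, and that distinct orbits yield distinct vertices. Validity follows because contracting a $\iota$-invariant set of edges corresponds, under the subdivision-to-tree dictionary, to deleting a $\iota$-invariant set of diagonals, which preserves axial symmetry; and what remains is a single self-symmetric diagonal or a single symmetric pair, i.e.\ one of the coarsest nontrivial axially symmetric subdivisions enumerated before the definition of $\Delta_\AS(\alpha)$. Distinctness is clear since different surviving orbits give subdivisions containing different diagonals, hence non-isomorphic labeled trees. This is essentially bookkeeping once Proposition~\ref{contractionC} and the classification of minimal ASPTs are in hand, so I expect no serious difficulty.
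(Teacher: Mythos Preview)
Your proposal is correct and is precisely the argument the paper intends: the corollary is stated with no proof, merely as an immediate consequence of Proposition~\ref{contractionC}, and your steps (1)--(4) spell out that consequence in the natural way. One small slip in your parenthetical about degenerate cases: an ASPT with three non-leaf vertices need not be minimal---if the two non-leaf edges form a single $\iota$-orbit then $k=1$ and it is minimal, but if both are self-symmetric (e.g.\ coming from two distinct diagonals perpendicular to $l$) then $k=2$ and $F(T,\varphi)$ is an edge; this does not affect your main argument.
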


\section{The (sub)complex of centrally symmetric phylogenetic trees}\label{CSsec}

A dihedral ordering $\alpha$ of $N$ is \textit{centrally symmetric} if it is given by an ordering $(\sigma_1,\dots,\sigma_{2n})$ of $N$ satisfying $\sigma_{i+n}=-\sigma_i$ for all $i\in[n]$ (hence, any other ordering representing $\alpha$ also has this property). There are $2^{n-2}(n-1)!$ centrally symmetric dihedral orderings (or CSDOs).

Let $P$ be a regular $2n$-gon with edges labeled according to a CSDO $\alpha$. A subdivision of $P$ is \textit{centrally symmetric} if with every diagonal $d$ it also includes the diagonal centrally symmetric to $d$. By considering phylogenetic trees arising from such subdivisions, we obtain a notion of \textit{centrally symmetric phylogenetic trees} (CSPTs) and a notion of compatibility between CSPTs and CSDOs.

There are $n(n-1)$ coarsest nontrivial centrally symmetric subdivisions of $P$: the $n$ formed by one longest diagonal and $n(n-2)$ more formed by a pair of distinct diagonals that are centrally symmetric to each other. Again, we say that a CSPT compatible with $\alpha$ is \textit{minimal} if it arises from a coarsest nontrivial centrally symmetric subdivision of $P$. Such trees also have two or three non-leaf vertices.

\begin{definition}
We define a simplicial complex $\Delta_\CS(\alpha)$ whose vertex set is the set of minimal CSPTs compatible with $\alpha$. A subset of vertices forms a face if and only if the corresponding centrally symmetric subdivisions have a common refinement.
\end{definition}

The faces of $\Delta_\CS(\alpha)$ are in bijection with centrally symmetric subdivisions of $P$ or, in other words, with CSPTs compatible with $\alpha$. This is a flag complex of pure dimension $n-2$; we refer to $\Delta_\CS(\alpha)$ as a \textit{dual cyclohedron} since its face poset is dual to that of a cyclohedron. Again, a CSPT will be compatible with multiple CSDOs, making the union of the $\Delta_\CS(\alpha)$ non-disjoint.

\begin{definition}
The \textit{complex of centrally symmetric phylogenetic trees} $\Theta_\CS(n)$ is the union of $\Delta_\CS(\alpha)$ over all CSDOs $\alpha$, with vertices represented by the same phylogenetic trees identified. This is the simplicial complex whose vertex set consists of all minimal CSPTs with a subset forming a face if and only if it forms a face in one of the $\Delta_\CS(\alpha)$.    
\end{definition}

Similarly to the previous cases, the faces of $\Theta_\CS(n)$ are in bijection with CSPTs. An important observation is that CSPTs are also axially symmetric.

\begin{proposition}
Every CSPT is an ASPT.
\end{proposition}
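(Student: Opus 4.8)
The plan is to show that the planar picture realizing a CSPT can be "unfolded" across a diameter into an axially symmetric one. Let $(T,\varphi)$ be a CSPT, arising from a CSDO $\alpha=(\sigma_1,\dots,\sigma_{2n})$ with $\sigma_{k+n}=-\sigma_k$ and a centrally symmetric subdivision $S$ of the regular $2n$-gon $P$; write $c$ for the center of $P$. I would split into two cases according to whether some longest diagonal of $P$ lies in $S$. Assume first that $d=v_iv_{i+n}\in S$. Then $d$ is fixed as a set by the central symmetry of $P$, it is a full boundary edge of a unique cell on each of its two sides (no other diagonal of $S$ can meet its interior), and it divides $P$ into two $(n+1)$-gons $Q^+,Q^-$ exchanged by the central symmetry. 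The $n$ boundary edges of $Q^+$ carry labels $\sigma_{i+1},\dots,\sigma_{i+n}$, which by the CSDO relation contain exactly one of $\{k,-k\}$ for each $k\in[n]$. I then form the ASDO $\beta=(\sigma_{i+1},\dots,\sigma_{i+n},-\sigma_{i+n},\dots,-\sigma_{i+1})$, take the regular $2n$-gon $P'$ labeled by $\beta$ with axis $l'$, identify $Q^+$ with the half $Q$ of $P'$ bounded by $l'$ so that $d\leftrightarrow l'$ and edge labels agree, transport $S$ restricted to $Q^+$ to a subdivision of $Q$, and extend it by the reflection across $l'$ to an axially symmetric subdivision $S'$ of $P'$.

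The crux of this case is the claim that $(\beta,S')$ yields back exactly $(T,\varphi)$, not merely an isomorphic abstract tree. The cell–adjacency graphs of $S$ and $S'$ are isomorphic because each of $S$, $S'$ is reconstructed from the common data ($S$ on $Q^+$, identified with $S'$ on $Q$) together with its respective symmetry — central for $S$, reflective for $S'$ — and because the single cell touching $d$ on each side matches the single cell touching $l'$ on each side. The leaf labels agree on the $Q^+$-half by construction, and on the other half because the defining ASDO relation $\beta_{2n+1-k}=-\beta_k$ puts the leaf labeled $-\sigma_{i+k}$ on the reflected copy of the cell carrying the leaf labeled $\sigma_{i+k}$, exactly mirroring the identity $\sigma_{i+n+k}=-\sigma_{i+k}$ used on the central-symmetry side. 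Hence $(T,\varphi)$ is an ASPT.

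For the remaining case — no longest diagonal of $P$ lies in $S$ — the center $c$ lies in the interior of a unique cell $C$, which is therefore centrally symmetric about $c$ and (being a cell) has at least four vertices, all of them vertices of $P$. A pair $c_0,c_m$ of opposite vertices of $C$ is then joined by a chord through $c$, hence by a genuine longest diagonal $d$ of $P$, and this $d$ is a diagonal of $C$. Adding it to $S$ gives a centrally symmetric subdivision $S^+$ that does contain a longest diagonal and whose tree is obtained from $(T,\varphi)$ by contracting the self-symmetric edge corresponding to $d$. Applying the first case to $S^+$ realizes its tree via an ASDO $\beta$ and an axially symmetric subdivision ${S'}^+$ of $P'$ in which the axis $l'$ plays the role of $d$; deleting $l'$ from ${S'}^+$ yields an axially symmetric subdivision $S'$ whose associated tree is $(T,\varphi)$, so $(T,\varphi)$ is an ASPT. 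I expect the main obstacle to be the bookkeeping in the first case: verifying carefully that the abstract phylogenetic tree, and not just its planar embedding, survives re-gluing the half-picture by a reflection — in particular that every leaf ends up on the correct vertex. This is precisely where the compatibility between the CSDO relation $\sigma_{k+n}=-\sigma_k$ and the ASDO relation $\beta_j=-\beta_{2n+1-j}$ is used, and the one place a sign slip could break the argument; a secondary, milder point is confirming in the second case that $C$ has at least four vertices so that $c_0c_m$ is a bona fide diagonal of $C$ available to be added to $S$.
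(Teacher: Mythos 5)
Your proof is correct and follows essentially the same route as the paper: locate a longest diagonal compatible with the centrally symmetric subdivision, unfold one half across it into an axially symmetric picture, and check via the relation $\sigma_{k+n}=-\sigma_k$ versus $\beta_j=-\beta_{2n+1-j}$ that the labeled tree is unchanged. The only difference is organizational: where you split into cases and, when no longest diagonal lies in $S$, add one inside the central cell and later delete the axis, the paper treats both cases at once by choosing a longest diagonal that merely does not cross any diagonal of $S$ and flipping one half directly.
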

\begin{proof}
Let $(T,\varphi)$ be a CSPT compatible with a CSDO $\alpha$ and $P$ be a regular $2n$-gon with edges labeled in accordance with $\alpha$. Let $S$ be the centrally symmetric subdivision of $P$ corresponding to $(T,\varphi)$. We claim that there exists a longest diagonal $l$ of $P$ that does not cross any of the diagonals in $S$. Indeed, if $S$ includes a longest diagonal $l'$ then we just set $l=l'$. Otherwise, the center of $P$ is not contained in any of the diagonals in $S$, hence it is an interior point of some two-dimensional cell $Q$ in $S$. Note that $Q$ is itself centrally symmetric, hence any longest diagonal ending in a vertex of $Q$ can be chosen as $l$.

Now we transform $S$ to obtain an axially symmetric subdivision $R$. The idea is to choose one of the two $(n+1)$-gon halves formed by $l$ and ``flip'' both the edge labels and the diagonals in this half (see the figure below). More precisely, choose an ordering $(\sigma_1,\dots,\sigma_{2n})$ of $N$ representing $\alpha$ so that the edges labeled by $\sigma_1,\dots,\sigma_n$ lie on one side of $l$ and the remaining edges lie on the other. The ordering $(\sigma_1,\dots,\sigma_n,\sigma_{2n},\dots,\sigma_{n+1})$ (here the last $n$ elements are reversed) defines an ASDO $\beta$. If in $P$ we reverse the order of the edge labels lying on one side of $l$, the edges will be labeled in accordance with $\beta$. Now we replace each diagonal in $S$ lying on the same side of $l$ by its reflection in the perpendicular bisector of $l$ to obtain the subdivision $R$. It is clear that $R$ is an axially symmetric subdivision and the corresponding ASPT compatible with $\beta$ equals $(T,\varphi)$.
\end{proof}

\begin{center}
\scalebox{0.9}
{
    \centering
    \begin{minipage}{.35\textwidth}
        \centering
        \begin{tikzpicture}[scale=1/2]
    \foreach \i in {0, 1, 2, 3, 4, 5, 6, 7} {
        \coordinate (a\i) at (45*\i:3);
        \coordinate (l\i) at ($.55*(45*\i:3) + .55*(45*\i+45:3)$);
    }
    \draw (a0)--(a1)--(a2)--(a3)--(a4)--(a5)--(a6)--(a7)--cycle;

    \foreach \i in {0, 1, 2, 3} { 
        \tikzmath{\n = int(\i + 1);}
        \node at ($1.15*(l\i)$){$\n$};
    }
    \foreach \i in {4, 5, 6, 7} {
        \tikzmath{\n = int(mod(\i, 4) + 1);}
        \node at ($1.15*(l\i)$){$-\n$};
    }

    \draw (a4)--(a2)--(a5);
    \draw (a1)--(a6)--(a0);

    \draw[->] ($1.3*(a0)$)--($1.6*(a0)$);

    \coordinate (v1) at ($1/3*(a2) + 1/3*(a3) + 1/3*(a4)$);
    \coordinate (v2) at ($1/3*(a2) + 1/3*(a5) + 1/3*(a4)$);
    \coordinate (v3) at ($1/4*(a1) + 1/4*(a2) + 1/4*(a5) + 1/4*(a6)$);
    \coordinate (v4) at ($1/3*(a0) + 1/3*(a1) + 1/3*(a6)$);
    \coordinate (v5) at ($1/3*(a0) + 1/3*(a6) + 1/3*(a7)$);

    \color{red}
    \draw[thick] (v1)--(v2)--(v3)--(v4)--(v5);
    \draw[thick] (v1)--(l2);
    \draw[thick] (v1)--(l3);
    \draw[thick] (v2)--(l4);
    \draw[thick] (v3)--(l1);
    \draw[thick] (v3)--(l5);
    \draw[thick] (v4)--(l0);
    \draw[thick] (v5)--(l6);
    \draw[thick] (v5)--(l7);
\end{tikzpicture}
    \end{minipage}%
    \begin{minipage}{.35\textwidth}
        \centering
        \begin{tikzpicture}[scale=1/2]
    \foreach \i in {0, 1, 2, 3, 4, 5, 6, 7} {
        \coordinate (a\i) at (45*\i:3);
        \coordinate (l\i) at ($.55*(45*\i:3) + .55*(45*\i+45:3)$);
    }
    
    \draw (a1)--(a2)--(a3)--(a4)--(a5);
    
    \coordinate (shift) at ($.25*(a7)$);
    \draw ($(shift)+(a5)$)--($(shift)+(a6)$)--($(shift)+(a7)$)--($(shift)+(a0)$)--($(shift)+(a1)$);

    \foreach \i in {0, 1, 2, 3} { 
        \tikzmath{\n = int(\i + 1);}
        \ifnum \i > 0
        \node at ($1.15*(l\i)$){$\n$};
        \else
        \node at ($1.15*(l\i) + (shift)$){$\n$};
        \fi
    }
    \foreach \i in {4, 5, 6, 7} {
        \tikzmath{\n = int(mod(\i, 4) + 1);}
        \ifnum \i < 5
        \node at ($1.15*(l\i)$){$-\n$};
        \else
        \node at ($1.15*(l\i) + (shift)$){$-\n$};
        \fi
    }

    \draw (a4)--(a2)--(a5);
    \draw ($(shift)+(a1)$)--($(shift)+(a6)$)--($(shift)+(a0)$);

    \draw[dashed] ($1.15*(a5)$)--($1.15*(a1)$);
    \draw[dashed] ($1.15*(a5)+(shift)$)--($1.15*(a1)+(shift)$);
    \node[right] at ($1.15*(a1)$){$l$};

    \draw[->] ($1.4*(a0)$)--($1.7*(a0)$);
\end{tikzpicture}
    \end{minipage}%
    \begin{minipage}{.35\textwidth}
        \centering
        \begin{tikzpicture}[scale=1/2]
    \foreach \i in {0, 1, 2, 3, 4, 5, 6, 7} {
        \coordinate (a\i) at (45*\i:3);
        \coordinate (l\i) at ($.55*(45*\i:3) + .55*(45*\i+45:3)$);
    }
    \draw (a0)--(a1)--(a2)--(a3)--(a4)--(a5)--(a6)--(a7)--cycle;

    \node at ($1.15*(l0)$){$-2$};
    \node at ($1.15*(l1)$){$2$};
    \node at ($1.15*(l2)$){$3$};
    \node at ($1.15*(l3)$){$4$};
    \node at ($1.15*(l4)$){$-1$};
    \node at ($1.15*(l5)$){$1$};
    \node at ($1.15*(l6)$){$-4$};
    \node at ($1.15*(l7)$){$-3$};

    \draw (a4)--(a2)--(a5);
    \draw (a5)--(a0)--(a6);

    \draw[dashed] ($1.15*(a5)$)--($1.15*(a1)$);
    \node[right] at ($1.15*(a1)$){$l$};

    \coordinate (v1) at ($1/3*(a2) + 1/3*(a3) + 1/3*(a4)$);
    \coordinate (v2) at ($1/3*(a2) + 1/3*(a5) + 1/3*(a4)$);
    \coordinate (v3) at ($1/4*(a1) + 1/4*(a2) + 1/4*(a5) + 1/4*(a6)$);
    \coordinate (v4) at ($1/3*(a0) + 1/3*(a5) + 1/3*(a6)$);
    \coordinate (v5) at ($1/3*(a0) + 1/3*(a6) + 1/3*(a7)$);

    \color{red}
    \draw[thick] (v1)--(v2)--(v3)--(v4)--(v5);
    \draw[thick] (v1)--(l2);
    \draw[thick] (v1)--(l3);
    \draw[thick] (v2)--(l4);
    \draw[thick] (v3)--(l1);
    \draw[thick] (v3)--(l0);
    \draw[thick] (v4)--(l5);
    \draw[thick] (v5)--(l6);
    \draw[thick] (v5)--(l7);
\end{tikzpicture}
    \end{minipage}
}
\end{center}
\vspace{-7mm}

In the proof we define a procedure which constructs an axially symmetric subdivision from a centrally symmetric subdivision. This procedure depends on the choice of a longest diagonal $l$. However, it is clear that if one centrally symmetric subdivision refines another, then there is a choice of $l$ compatible with both of them. Under this choice of $l$ the respective axially symmetric subdivisions will satisfy the same refinement relation. It follows that a minimal CSPT is also minimal as an ASPT, i.e.\ it is a vertex of $\Theta_\AS(n)$. More generally, if a set of centrally symmetric subdivisions of $P$ have a common refinement, then there is a choice of $l$ compatible with all of them and the respective axially symmetric subdivisions also have a common refinement. We obtain the following.

\begin{corollary}
For every CSDO $\alpha$ the complex $\Delta_\CS(\alpha)$ is a subcomplex of $\Theta_\AS(n)$, the complex of ASPTs. Consequently, the complex of CSPTs $\Theta_\CS(n)$ is also subcomplex of $\Theta_\AS(n)$.
\end{corollary}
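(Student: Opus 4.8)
The plan is to leverage the procedure constructed in the preceding proposition, which associates to each centrally symmetric subdivision of $P$ (with respect to a chosen longest diagonal $l$ that crosses none of its diagonals) an axially symmetric subdivision of $P$, together with an ASDO $\beta$ with which the resulting ASPT is compatible. First I would fix a CSDO $\alpha$ and observe that the faces of $\Delta_\CS(\alpha)$ are exactly the centrally symmetric subdivisions of $P$, ordered by refinement. I would then check that the map sending a centrally symmetric subdivision $S$ to its associated axially symmetric subdivision $R$ (i) sends minimal CSPTs to vertices of $\Theta_\AS(n)$, i.e.\ to minimal ASPTs, and (ii) is injective and order-preserving on faces. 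Point (i) amounts to noting that a coarsest nontrivial centrally symmetric subdivision consists either of a single longest diagonal or of a centrally symmetric pair of diagonals; under the flipping procedure the former becomes (after choosing $l$ disjoint from it) a single diagonal perpendicular to the new axis or the axis itself, and the latter becomes a symmetric pair not crossing the axis — in all cases a coarsest nontrivial axially symmetric subdivision, hence a minimal ASPT.

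The key step is point (ii), the compatibility with refinement, and here the subtlety is the dependence of the construction on the auxiliary choice of $l$. As already noted in the text immediately following the proposition, if one centrally symmetric subdivision refines another, there is a single longest diagonal $l$ disjoint from all diagonals of the finer one (hence also of the coarser one), and performing the flipping procedure with this common $l$ produces axially symmetric subdivisions satisfying the same refinement relation and compatible with the same ASDO $\beta$. More generally, if a collection of minimal centrally symmetric subdivisions has a common refinement $S^\ast$, choosing $l$ disjoint from all diagonals of $S^\ast$ works simultaneously for every member of the collection, and the flipping procedure — being just a relabeling of edges by a dihedral symmetry together with a reflection of the diagonals lying on one side of $l$ — carries the refinement order over verbatim. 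Thus a set of minimal CSPTs forms a face of $\Delta_\CS(\alpha)$ if and only if the corresponding centrally symmetric subdivisions have a common refinement, if and only if (using this common $l$) the corresponding axially symmetric subdivisions have a common refinement, which is precisely the condition for the associated minimal ASPTs to form a face of $\Delta_\AS(\beta)\subseteq\Theta_\AS(n)$.

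To finish, I would record that the vertex-level identification is consistent: a minimal CSPT is identified with a well-defined vertex of $\Theta_\AS(n)$ regardless of which CSDO it is compatible with or which admissible $l$ is chosen, because in $\Theta_\AS(n)$ vertices are the ASPTs themselves (with those represented by the same tree identified), and the flipping procedure does not change the underlying tree $(T,\varphi)$ — it only exhibits compatibility with a new dihedral ordering. Hence $\Delta_\CS(\alpha)$ embeds as a subcomplex of $\Theta_\AS(n)$. Since this holds for every CSDO $\alpha$ and $\Theta_\CS(n)$ is by definition the union of the $\Delta_\CS(\alpha)$ with vertices represented by the same tree identified, and $\Theta_\AS(n)$ is closed under such identifications, it follows that $\Theta_\CS(n)$ is a subcomplex of $\Theta_\AS(n)$, as claimed.

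I expect the main obstacle to be bookkeeping around the choice of $l$: one must verify that a single $l$ can be chosen to witness a whole face of $\Delta_\CS(\alpha)$ at once, and that the procedure is genuinely order-preserving (not merely order-reflecting) on the nose, so that non-faces go to non-faces. Both follow from the flag property of the complexes and from the explicit nature of the flipping procedure, but they are the points that require care rather than the routine parts.
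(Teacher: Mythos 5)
Your proposal is correct and follows essentially the same route as the paper: it uses the flipping procedure from the preceding proposition, observes that for a collection of centrally symmetric subdivisions with a common refinement one can choose a single longest diagonal $l$ compatible with all of them so that the associated axially symmetric subdivisions satisfy the same refinement relations, and notes that the procedure leaves the underlying tree $(T,\varphi)$ unchanged, so faces of $\Delta_\CS(\alpha)$ become faces of some $\Delta_\AS(\beta)\subset\Theta_\AS(n)$. The only difference is cosmetic: your worry about non-faces mapping to non-faces is unnecessary for the subcomplex claim, which only requires faces to map to faces.
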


In Figure~\ref{C2graph} the subcomplex $\Theta_\CS(3)$ is highlighted in bold. It has 10 vertices and 12 edges. One may check that the respective trees are indeed centrally symmetric. The subcomplex $\Delta_\CS(\alpha)$ for $\alpha$ given by $(1,-2,3,-1,2,-3)$ is highlighted in blue; it is the dual cyclohedron, i.e.\ a 6-cycle. 

\section{$\cM_{\rC_{n-1}}$ and its tropicalization}

In this section we first recall the definition of the type C cluster configuration space $\cM_{\rC_{n-1}}$. We follow \cite{AHL} where $\cM_{\rC_{n-1}}$ is realized as the (very affine part of the) binary geometry defined by the dual cyclohedron. We then give a conjectural description of $\Trop \cM_{\rC_{n-1}}$ as of the space of ASPTs. We also conjecture that the signed tropicalizations of $\cM_{\rC_{n-1}}$ are the subfans corresponding to the dual-associahedron and the dual-cyclohedron subcomplexes.

Let $n$ and $N$ be as in the previous section and let $\alpha_0$ denote the CSDO of $N$ given by the ordering $(1,\dots,n,-1,\dots,-n)$. We set $\Delta_\CS(n)=\Delta_\CS(\alpha_0)$. As in the case of $\cM_{0,n}$, we enumerate the vertices of $\Delta_\CS(n)$ by certain pairs of elements of $N$. Recall that these vertices correspond to minimal CSPTs compatible with $\alpha_0$ and, thus, to coarsest nontrivial centrally symmetric subdivisions of the regular $2n$-gon $P$ with edges labeled in accordance with $\alpha_0$. 

For $i\in N$ we use the notation $i\pp$ for the element that succeeds $i$ in $\alpha_0$. Explicitly: $i\pp=\sgn(i)(|i|+1)$ for $|i|\in[1,n-1]$ while $(\pm n)\pp=\mp1$.
Let $i\in N$ denote the vertex of $P$ that is adjacent to edges with labels $i$ and $i\pp$. Denote the diagonal connecting vertices $i$ and $j$ by $d_{i,j}$. A coarsest nontrivial centrally symmetric subdivision is either formed by a single diagonal of the form $d_{i,-i}$ or by a pair of non-crossing diagonals of the forms $d_{i,j}$ and $d_{-i,-j}$. In the former case the subdivision corresponds to the pair $(i,-i)$ where $i\in[n]$. In the latter case it corresponds to the pair $(i,j)$ where we may assume that $i\in[n]$ and $i<|j|$. The resulting set of pairs $D$ enumerating the vertices of $\Theta_\CS(n)$ consists of pairs $(i,j)$ of distinct elements of $N$ for which $i\in[n]$, $i\le |j|$ and, furthermore, both $i\pp\neq j$ and $i\neq j\pp$. In particular, $|D|=n(n-1)$. We now apply Definition~\ref{binarydef} with the compatibility degree $a_{(i,j),(k,l)}$ of two non-adjacent vertices in $\Theta_\CS(n)$ equal to the number of diagonals in the set $\{d_{k,l},d_{-k,-l}\}$ that cross the diagonal $d_{i,j}$ (thus either 1 or 2).

\begin{definition}[{\cite[Section 3.2.2]{AHL}}]
\label{MCndef}
$\wt\cM_{\rC_{n-1}}$ is the affine subvariety of $\bC^D$ cut out by the $u$-equations
$$R_{i,j} := u_{i,j} + \prod_{\substack{d_{k, l}\text{ or }d_{-k,-l}\\\text{ crosses }d_{i,j}}} u_{k,l}^{a_{(i,j),(k,l)}} ~- 1=0,\quad (i,j)\in D.$$
The ideal $I_n\subset\bC[u_{i,j}]_{(i,j)\in D}$ generated by the $R_{i,j}$ is the defining ideal of $\wt\cM_{\rC_{n-1}}$. The very affine variety $\cM_{\rC_{n-1}}$ is the open part $\wt\cM_{\rC_{n-1}}\cap(\bC^*)^D$.
\end{definition}

\begin{example}\label{C2example}
Let $n=3$. In this case we have
\[D=\{(1,-1),(2,-2),(3,-3),(1,3),(1,-2),(2,-3)\}.\]
The $u$-equations defining $\wt\cM_{\rC_2}$ are
\[u_{1,-1}+u_{2,-2}u_{3,-3}u_{2,-3}^2-1=0,\quad u_{1,3}+u_{2,-2}u_{1,-2}u_{2,-3}-1=0\]
together with with 4 more equations obtained from the above by cyclic rotations, i.e.\ iterations of the substitution $u_{i,j}\mapsto u_{i\pp,j\pp}$ (with the subscripts on the right transposed or negated if necessary).
\end{example}

Our conjectured description of $\Trop\cM_{\rC_{n-1}}=\Trop I_n$ will be given in terms of pseudometrics on symmetric phylogenetic trees. Consider an ASPT $(T,\varphi)$, and let $E$ denote the set of non-leaf edges of $T$ and $\iota$ denote the symmetry of $(T,\varphi)$. A \textit{symmetric realization} of $(T,\varphi)$ is an assignment of lengths to the edges in $E$ given by a vector $l=(l_e)_{e\in E}\in\bR_{\ge 0}^E$ such that $l_e=l_{\iota(e)}$ for any $e\in E$. Such a realization determines distances between pairs of vertices of $T$ where the lengths of leaf edges are assumed to be zero. For $i,j\in N$ we denote by $d_{(T,\varphi),l}(i,j)$ the resulting distance function between leaf vertices $\varphi(i)$ and $\varphi(j)$. Evidently, this function satisfies $d(i,j)=d(-i,-j)$. 

Let $\SR(T,\varphi)$ denote the set of all symmetric realizations of $(T,\varphi)$. We associate a cone in $\bR^D$ with the tree $(T,\varphi)$ as follows:
\begin{equation}\label{CTphiC}
C_{T,\varphi}=
\{(d(i,j)+d(i\pp,j\pp)-d(i,j\pp)-d(i\pp,j))_{(i,j)\in D}|
l\in\SR(T,\varphi)\}       
\end{equation}
where we denote $d=d_{(T,\varphi),l}$.

\begin{proposition}
The set $C_{T,\varphi}$ is a simplicial cone of dimension $k(T,\varphi)$.
\end{proposition}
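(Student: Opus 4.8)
The plan is to show first that $C_{T,\varphi}$ is the linear image of the cone of symmetric realizations and that this image is injective, which will simultaneously give that $C_{T,\varphi}$ is a simplicial cone and that its dimension is the number of free length parameters, namely $k(T,\varphi)$. Recall that a symmetric realization is a vector $l=(l_e)_{e\in E}$ subject to $l_e=l_{\iota(e)}$, so the space of such realizations is parametrized by choosing one nonnegative length per $\iota$-orbit of non-leaf edges; there are $k(T,\varphi)$ orbits, so $\SR(T,\varphi)$ is the nonnegative orthant $\RR_{\ge0}^{k(T,\varphi)}$ (after picking an orbit representative for each orbit). The assignment $l\mapsto d_{(T,\varphi),l}$ is linear, and so is the further passage $d\mapsto(d(i,j)+d(i\pp,j\pp)-d(i,j\pp)-d(i\pp,j))_{(i,j)\in D}$; hence $C_{T,\varphi}$ is the image of $\RR_{\ge0}^{k(T,\varphi)}$ under a single linear map $\Phi$. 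It is then immediate that $C_{T,\varphi}$ is a polyhedral cone, and it is simplicial of dimension $k(T,\varphi)$ precisely when $\Phi$ is injective on $\RR^{k(T,\varphi)}$ (equivalently, has trivial kernel).

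So the crux reduces to proving injectivity of $\Phi$. First I would fix an ASDO $\alpha$ with which $(T,\varphi)$ is compatible and let $P$ be the regular $2n$-gon with edges labeled by $\alpha$, so that $(T,\varphi)$ corresponds to an axially symmetric subdivision $S$ of $P$ and the non-leaf edges of $T$ correspond to the diagonals of $S$; an $\iota$-orbit of edges corresponds either to a pair of mutually symmetric diagonals or to a single diagonal perpendicular to (or equal to) the axis $l$. The coordinate $C_{T,\varphi}$ in position $(i,j)\in D$ is $d(i,j)+d(i\pp,j\pp)-d(i,j\pp)-d(i\pp,j)$, which is the classical ``four-point'' combination; as in the $\rA$ case, for a single-diagonal length it picks out exactly whether the diagonal $d_{i,j}$ of $P$ separates the two edges $\{i,i\pp\}$ from $\{j,j\pp\}$, i.e.\ whether $d_{i,j}$ crosses the diagonal of $P$ labeled (in $D$-coordinates) by $(i,j)$. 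Thus the $(i,j)$-coordinate of $\Phi(l)$ equals the sum of the lengths of the diagonals of $S$ that cross the diagonal corresponding to $(i,j)$, where by convention a symmetric pair of diagonals contributes its common orbit-length with multiplicity equal to how many of the two cross. Concretely, I would argue that the coordinate indexed by $(i,j)$ equals $\sum_{o} a_{(i,j),o}\, l_o$ where $o$ ranges over orbits and $a_{(i,j),o}\in\{0,1,2\}$ is exactly the compatibility-degree-type count of Definition~\ref{MCndef}.

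To finish, suppose $\Phi(l)=0$ with $l\in\RR^{k(T,\varphi)}$ (not necessarily nonnegative). I want to deduce $l_o=0$ for every orbit $o$. For each orbit $o$ represented by a diagonal $d$ of $S$, I would exhibit an index $(i,j)\in D$ such that the diagonal $d_{i,j}$ crosses the diagonals of that orbit but no other diagonal of $S$; then the $(i,j)$-coordinate of $\Phi(l)$ is $a_{(i,j),o}\,l_o$ with $a_{(i,j),o}\ne0$, forcing $l_o=0$. Such an index exists because $d$ is a diagonal of $S$, and one can take $(i,j)$ to be (a $D$-representative of) a diagonal of $P$ that crosses $d$'s orbit and lies ``just inside'' the region of $S$ so that it meets no other diagonal — this is the exact same combinatorial fact used implicitly for $\Trop\cM_{0,n}$, namely that in a triangulation-type subdivision each diagonal admits a transversal meeting only it; the axial symmetry means we must track the $D$-convention ($i\in[n]$, $i\le|j|$, $i\pp\ne j$, $i\ne j\pp$), but the argument is unchanged. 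The main obstacle I anticipate is precisely this last combinatorial step: carefully checking that for every orbit of diagonals of an axially symmetric subdivision there is a legitimate index in $D$ (respecting all the normalization constraints) whose diagonal crosses that orbit and nothing else, and handling the special orbits consisting of a single diagonal perpendicular to or equal to the axis. Everything else — linearity, the reduction to injectivity, the identification of coordinates with crossing-counts — is routine and parallels Section~\ref{M0nsec}.
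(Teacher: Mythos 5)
Your first paragraph (identifying $\SR(T,\varphi)$ with $\RR_{\ge0}^{k(T,\varphi)}$ and reducing everything to injectivity of the linear map $\Phi$) matches the paper's reduction. The gap is in your claimed coordinate identity, which is false, and the rest of your argument rests on it. The four-point combination $d(i,j)+d(i\pp,j\pp)-d(i,j\pp)-d(i\pp,j)$ is \emph{not} a crossing-weighted sum of the lengths of the diagonals of $S$: for a tree metric it equals $0$, $+2m$ or $-2m$ according to the quartet topology of the leaves $\{i,i\pp,j,j\pp\}$ in $T$, where $m$ is the total length of the edges on the middle path of that quartet. Concretely, a diagonal of $S$ that crosses $d_{i,j}$ transversally (no shared endpoint) leaves all four leaves pairwise on the ``same side pattern'' and contributes coefficient $0$, while the diagonal coinciding with $d_{i,j}$ contributes $2$, and diagonals inducing the opposite quartet contribute $-2$. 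So your coefficients $a_{(i,j),o}\in\{0,1,2\}$ are wrong both in value and in sign; in particular a nonnegative ``crossing count'' formula would force every cone $C_{T,\varphi}$ to lie in the positive orthant, which already fails for $n=3$: Table~\ref{C2trees} lists ray generators such as $e_{1,3}-e_{1,-1}-e_{3,-3}$ and $e_{1,-1}+e_{2,-2}+e_{3,-3}-e_{1,3}-e_{1,-2}-e_{2,-3}$ with negative entries. (There is also a secondary confusion: the indices $(i,j)\in D$ and the successor operation $\pp$ refer to the fixed ordering $\alpha_0$, while your subdivision $S$ lives in a polygon labeled by an arbitrary ASDO $\alpha$, so ``$d_{i,j}$ crosses a diagonal of $S$'' is not even the relevant combinatorial relation unless $\alpha$ is suitably related to $\alpha_0$.) Consequently your final step --- finding a transversal $(i,j)$ meeting only the orbit $o$ and concluding $l_o=0$ --- does not establish injectivity.

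The paper avoids computing the coordinates altogether: it relabels the leaves by $[2n]$ via a bijection $\delta$, observes that the type A map $f_{2n}$ of~\eqref{CTphiA} is injective (this is the known statement that the cones of $\Trop\Gr^\circ(2,2n)$, hence of $\Trop\cM_{0,2n}$, are simplicial of dimension $|E|$), and then factors $f_{2n}=g\circ f$ on the symmetric subspace, where $g:\bR^D\to\bR^{D_{2n}}$ simply duplicates each coordinate along the central-symmetry orbit of diagonals; injectivity of $f$ (on the span of $\SR(T,\varphi)$) follows. If you want to keep a direct computational argument, you would first need the correct quartet-based description of the coordinates and then an isolation argument adapted to it, which is considerably more delicate than the crossing picture you propose.
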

\begin{proof}
The set $\SR(T,\varphi)$ is naturally a cone in the space $\bR^E$. It is simplicial of dimension $k(T,\varphi)$ since a symmetric realization is determined by $k(T,\varphi)$ edge lengths. Formula~\eqref{CTphiC} provides a linear map $f:\bR^E\to\bR^D$ such that $f(\SR(T,\varphi))=C_{T,\varphi}$ and it suffices to check that $f$ is injective.

This can be deduced from the discussion in Section~\ref{M0nsec}. Let $\delta:[2n]\to N$ be the bijection taking $i\le n$ to $i$ and $i>n$ to $n-i$. We obtain a phylogenetic tree $(T,\varphi\circ\delta)$ with leaves labeled by $[2n]$. Realizations of $(T,\varphi\circ\delta)$ form an $|E|$-dimensional cone $K$ in $\bR^E$ (the positive orthant). The corresponding cone $C_{T,\varphi\circ\delta}$ defined by~\eqref{CTphiA} is an $|E|$-dimensional cone in $R^{D_{2n}}$ where $D_{2n}$ is the set of pairs $(i,j)$ with $1\le i<j\le 2n$ and $i-j$ not equal to $\pm1$ modulo $2n$. The cone $C_{T,\varphi\circ\delta}$ is the image of $K$ under the injective map $f_{2n}:\bR^E\to\bR^{D_{2n}}$ given by formula~\eqref{CTphiA}.

Now, by comparing formulas~\eqref{CTphiA} and~\eqref{CTphiC} it is not hard to define a map $g:\bR^D\to\bR^{D_{2n}}$ satisfying $f_{2n}=g\circ f$ which will imply the injectivity of $f$. Indeed, the elements of $D_{2n}$ correspond to all diagonals of the regular $2n$-gon while elements of $D$ correspond to a subset of these diagonals containing exactly one element of every central symmetry orbit. Thus, we have a natural surjection $\pi:D_{2n}\to D$ and set $g(w)_{i,j}=w_{\pi(i,j)}$ for $w\in\bR^D$.
\end{proof}

\begin{corollary}
The facets of $C_{T,\varphi}$ are the cones $C_{T',\varphi'}$ such that $(T',\varphi')$ is obtained from $(T,\varphi)$ by a symmetric edge contraction.
\end{corollary}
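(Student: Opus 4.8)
The plan is to derive this corollary directly from the preceding proposition together with the combinatorial description of face containment in $\Theta_\AS(n)$. Recall that the previous proposition shows $C_{T,\varphi}$ is a simplicial cone of dimension $k(T,\varphi)$, and that it is the image of the simplicial cone $\SR(T,\varphi)\subset\bR^E$ under the injective linear map $f$ defined by~\eqref{CTphiC}. Since $f$ is injective, the faces of $C_{T,\varphi}$ are exactly the images under $f$ of the faces of $\SR(T,\varphi)$, and in particular the facets of $C_{T,\varphi}$ are the images of the facets of $\SR(T,\varphi)$.

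So the core of the argument is to identify the facets of the cone $\SR(T,\varphi)$ of symmetric realizations. First I would recall that a symmetric realization is determined by assigning a nonnegative length $l_e$ to each $\iota$-orbit of non-leaf edges, subject to no further constraints; hence $\SR(T,\varphi)$ is literally the nonnegative orthant $\bR_{\ge 0}^{k(T,\varphi)}$, with coordinates indexed by the $\iota$-orbits in $E$. Its facets are obtained by setting one of these orbit-coordinates to zero, i.e.\ by forcing $l_e=l_{\iota(e)}=0$ for the edges $e$ in a single orbit while leaving all the others free. Setting the length of an edge to zero amounts to contracting it, and setting a whole $\iota$-orbit to zero is precisely a symmetric edge contraction in the sense defined before Proposition~\ref{contractionC} (contract $e$ and $\iota(e)$, or just $e$ if $e=\iota(e)$). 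If $(T',\varphi')$ denotes the ASPT obtained from $(T,\varphi)$ by this symmetric edge contraction, then one checks from the definitions that the corresponding facet of $\SR(T,\varphi)$ maps under $f$ onto the cone $\SR(T',\varphi')$ realized in the coordinates of $T$; comparing with~\eqref{CTphiC} written for $(T',\varphi')$, whose non-leaf edge set is $E$ minus the contracted orbit, shows that this image is exactly $C_{T',\varphi'}$. Conversely every symmetric edge contraction arises this way, giving the claimed bijection between facets of $C_{T,\varphi}$ and symmetric edge contractions of $(T,\varphi)$.

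The only mild subtlety — and the step I would treat most carefully — is the bookkeeping identifying $f$ restricted to a coordinate hyperplane of $\SR(T,\varphi)$ with the map defining $C_{T',\varphi'}$. Concretely, one must observe that the distance function $d_{(T,\varphi),l}$ evaluated at a realization with $l_e=l_{\iota(e)}=0$ coincides with $d_{(T',\varphi'),l'}$ for the induced realization $l'$ of $(T',\varphi')$, since contracting an edge of length zero changes no pairwise leaf distance; plugging this into~\eqref{CTphiC} for $(T,\varphi)$ yields exactly~\eqref{CTphiC} for $(T',\varphi')$. Everything else is formal: dimension drops by one under a symmetric edge contraction because $k(T',\varphi')=k(T,\varphi)-1$, matching the facet dimension, so these are genuinely the facets and not lower-dimensional faces. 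One could alternatively phrase the whole argument purely poset-theoretically, invoking Proposition~\ref{contractionC} to say that the faces of $\Theta_\AS(n)$ immediately below $F(T,\varphi)$ are exactly the $F(T',\varphi')$ with $(T',\varphi')$ a single symmetric edge contraction of $(T,\varphi)$, and then noting that $(T,\varphi)\mapsto C_{T,\varphi}$ is an order-preserving bijection onto a fan; but the direct computation with $\SR$ is the cleanest route and is already essentially contained in the proof of the preceding proposition.
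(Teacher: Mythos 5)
Your argument is correct and follows essentially the same route as the paper's proof: identify the facets of $\SR(T,\varphi)$ as the loci where one $\iota$-orbit of edge lengths vanishes, and transport them through the linear bijection onto $C_{T,\varphi}$ using the key observation that $d_{(T,\varphi),l}=d_{(T',\varphi'),l'}$ when the contracted edges have length zero. The extra remarks on injectivity of $f$ and the dimension count only make explicit what the paper leaves implicit.
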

\begin{proof}
The facets of the cone $\SR(T,\varphi)$ are the subsets $F_e$, $e\in E$ of symmetric realizations $l$ with $l_e=0$. Note that $F_e=F_{\iota(e)}$. Choose $e\in E$ and let $(T',\varphi')$ be obtained from $(T,\varphi)$ by contracting $e$ and $\iota(e)$ (or just $e$ if $e=\iota(e)$). Then, under the linear bijection from $\SR(T,\varphi)$ to $C_{T,\varphi}$, the facet $F_e$ is mapped to $C_{T',\varphi'}$. This follows from the fact that we have a bijection $l\mapsto l'$ from $F_e$ to $\SR(T'\varphi')$ such that $d_{(T,\varphi),l}=d_{(T',\varphi'),l'}$. Here $l'$ is obtained from $l$ by simply forgetting the coordinates corresponding to $e$ and $\iota(e)$.
\end{proof}

The above shows that the cones $C_{T,\varphi}$ with $(T,\varphi)$ ranging over all ASPTs form a simplicial polyhedral fan in $\bR^D$. Furthermore, in view of Proposition~\ref{contractionC} and Corollary~\ref{facedimC}, this fan is combinatorially equivalent to the fan over the complex of ASPTs $\Theta_\AS(n)$. We refer to this fan as the \textit{space of axially symmetric phylogenetic trees}. We now state our first conjecture.
\begin{conjecture}\label{mainconj}
The tropicalization $\Trop\cM_{\rC_{n-1}}$ is the space of ASPTs: the fan formed by the cones $C_{T,\varphi}$ with $(T,\varphi)$ ranging over all ASPTs.
\end{conjecture}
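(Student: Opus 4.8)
The conjecture asserts an equality of two fans in $\bR^D$, and the natural line of attack is to prove the two inclusions separately, exploiting the combinatorial control we already have over the space of ASPTs. On the one hand, the cones $C_{T,\varphi}$ assemble into a simplicial fan combinatorially equivalent to $\Theta_\AS(n)$, which is pure of dimension $n-2$; on the other, $\Trop\cM_{\rC_{n-1}}=\Trop I_n$ is pure of the same dimension $n-1-1$... wait, of the same dimension since $\wt\cM_{\rC_{n-1}}$ has dimension $n-1$ and the very affine variety $\cM_{\rC_{n-1}}$ has dimension $n-1$, but the relevant ambient count must be reconciled with $\dim\Theta_\AS(n)+1=n-1$. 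So both fans have the same pure dimension, and it therefore suffices to show that every cone $C_{T,\varphi}$ is contained in $\Trop\cM_{\rC_{n-1}}$: a containment of two pure fans of equal dimension, one inside the other, forces equality provided the smaller one is not contained in a proper subfan, which purity and the top-dimensional count guarantee. So the plan reduces to: \emph{(i)} show $C_{T,\varphi}\subseteq\Trop I_n$ for every ASPT $(T,\varphi)$, and \emph{(ii)} show there are no extra maximal cones, i.e.\ the dimension and support counts match.

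For step \emph{(i)}, I would first treat the maximal cones, i.e.\ those $C_{T,\varphi}$ with $k(T,\varphi)=n-1$, corresponding to triangulations of $P$ by axially symmetric families of diagonals. Here the cleanest route is to exhibit, for a generic weight $w$ in the relative interior of such a cone, an explicit monomial-free initial ideal $\init_w I_n$, or at least to verify that $\init_w I_n$ contains no monomial — equivalently, that the initial degeneration is nonempty over the torus. The $u$-equations~\eqref{MCndef} make this checkable: for a weight coming from a symmetric realization $l$ of a binary tree, one computes the weight of each monomial appearing in $R_{i,j}=u_{i,j}+\prod u_{k,l}^{a_{(i,j),(k,l)}}-1$ and checks which term(s) of $\{u_{i,j},\ \prod u_{k,l}^{a},\ 1\}$ achieve the minimum; the geometry of crossings of $d_{i,j}$ with the symmetric pair $\{d_{k,l},d_{-k,-l}\}$ relative to the chosen triangulation should force $\init_w R_{i,j}$ to be a binomial or trinomial with no cancellation, so $\init_w I_n$ is visibly monomial-free. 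For lower-dimensional cones one can either run the same computation or deduce it from the facet structure (our Corollary on facets) together with the fact that $\Trop I_n$ is closed and is a fan. The analogue in type A — formula~\eqref{CTphiA} and Theorem~\ref{thm:tropM0n} — is the template; the key new input is handling the squared compatibility degrees $a_{(i,j),(k,l)}=2$, which occur exactly when both $d_{k,l}$ and $d_{-k,-l}$ cross $d_{i,j}$, and verifying these do not introduce monomials.

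For step \emph{(ii)}, the delicate part, I would compare top-dimensional cone counts. The number of maximal cones of the space of ASPTs is the number of axially symmetric triangulations of the $2n$-gon, which one can count combinatorially (the $n=3$ case gives $21$, matching the Macaulay2 computation cited in the introduction). One then needs that $\Trop I_n$ has no additional maximal cones — this would follow if one knew $\Trop I_n$ is connected in codimension one (balanced, connected through codimension-one faces) together with a local argument at a single maximal cone showing its codimension-one faces are shared only with the $C_{T,\varphi}$'s; alternatively, a degree or multiplicity computation via the tropical version of the projection formula. \textbf{I expect this second step to be the main obstacle}, precisely because it is the content that keeps the statement a conjecture: proving $I_n$ is prime (or at least that $\Trop I_n$ is pure and connected in codimension one with the expected support) for general $n$ seems to require either a Gröbner-theoretic argument uniform in $n$ or a cluster-algebraic identification of $\cM_{\rC_{n-1}}$ with a known variety whose tropicalization is understood — neither of which is supplied by the tools in the excerpt. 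A realistic intermediate goal is therefore to prove \emph{(i)} unconditionally, giving the containment $\bigcup_{(T,\varphi)} C_{T,\varphi}\subseteq\Trop\cM_{\rC_{n-1}}$, and to reduce the reverse containment to the single statement that $I_n$ is prime of the expected dimension, which is the form in which I would present the result if a full proof is out of reach.
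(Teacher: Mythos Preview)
The paper does not prove this statement: it is explicitly presented as a conjecture, with the only evidence being the Macaulay2 verification for $n=3$ described in Section~\ref{C2section}. So there is no ``paper's own proof'' to compare against, and your honest assessment that step~\emph{(ii)} is the genuine obstacle matches the paper's stance exactly.

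That said, your step~\emph{(i)} has a gap worth naming. You argue that for $w$ in the interior of a maximal $C_{T,\varphi}$ each $\init_w R_{i,j}$ is a binomial or trinomial, hence monomial-free, and conclude that ``$\init_w I_n$ is visibly monomial-free''. This inference is not valid in general: the initial ideal $\init_w I_n$ can contain monomials even when none of the $\init_w R_{i,j}$ do, unless the $R_{i,j}$ are known to form a \emph{tropical basis} of $I_n$. Nothing in the paper establishes this (and for general binary geometries it is false). So even the ``easy'' inclusion $\bigcup C_{T,\varphi}\subseteq\Trop I_n$ is not yet secured by your argument; you would need either a tropical-basis statement for the $u$-equations in type C, or a direct construction of a point in the initial degeneration over the torus for each maximal cone. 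Your reduction of the reverse inclusion to primality and a cone count is reasonable in spirit, but note that primality of $I_n$ alone does not control the number of maximal cones of its tropicalization; a balancing or multiplicity argument of the kind you sketch would still be required.
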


Our second conjecture concerns the sign patterns occurring in $\cM_{\rC_{n-1}}(\bR)$ and the corresponding signed tropicalizations. In the following note that a dihedral ordering of $N$ cannot be both axially symmetric and centrally symmetric, thus there are $2^{n-2}(n+1)(n-1)!$ dihedral orderings with one of these properties. 
\begin{conjecture}\label{signedconj}
There is a bijection $\alpha\mapsto\tau_\alpha$ from the set of all dihedral orderings of $N$ that are either axially symmetric or centrally symmetric to the set of sign patterns occurring in $\cM_{\rC_{n-1}}(\bR)$ such that the following holds. 
\begin{enumerate}[label=(\alph*)]
\item If $\alpha$ is axially symmetric, then the signed tropicalization $\Trop_{\tau_\alpha} \cM_{\rC_{n-1}}$ is a fan over the dual associahedron $\Delta_\AS(\alpha)$: it is formed by those cones $C_{T,\varphi}$ for which $(T,\varphi)$ is an ASPT compatible with $\alpha$. 
\item If $\alpha$ is centrally symmetric, then the signed tropicalization $\Trop_{\tau_\alpha} \cM_{\rC_{n-1}}$ is a fan over the dual cyclohedron $\Delta_\CS(\alpha)$: it is formed by those cones $C_{T,\varphi}$ for which $(T,\varphi)$ is a CSPT compatible with $\alpha$.
\end{enumerate}
\end{conjecture}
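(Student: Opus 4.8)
The plan is to mirror the type~A argument behind Theorem~\ref{thm:tropM0n-pos}, with a hyperoctahedral symmetry playing the role of the $\fS_n$-symmetry of $\cM_{0,n}$ and with \emph{two} base cases — one cyclohedral, one associahedral — in place of the single one. \textbf{Step 1: real points and the bijection.} Following~\cite{AHL}, one realizes $\cM_{\rC_{n-1}}$ inside $\cM_{0,2n}$ (via the standard folding of the $2n$-gon) as the locus of configurations $(x_i)_{i\in N}$, ordered cyclically by $\alpha_0$, admitting an order-two M\"obius symmetry $g$ with $g(x_i)=x_{-i}$; the $u^{\rC}$-coordinates of Definition~\ref{MCndef} are exactly the $\cM_{0,2n}$-coordinates restricted along the surjection $\pi\colon D_{2n}\to D$ used above, so this realization is compatible with the binary-geometry picture. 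Over $\bR$ the involution $g$ is, up to real conjugacy, either a reflection of $\bP^1(\bR)$ (two real fixed points) or rotation by $\pi$ (none); since $g$ is determined by the configuration when $n\ge 3$, its conjugacy class is locally constant on $\cM_{\rC_{n-1}}(\bR)$, as is the cyclic order of the $2n$ labeled points. The reflection case forces that cyclic order to be an ASDO, the rotation case a CSDO, and in each family the dihedral order determines the connected component. This produces a bijection $\alpha\mapsto\tau_\alpha$ from $\{\text{ASDOs}\}\sqcup\{\text{CSDOs}\}$, of size $2^{n-2}(n+1)(n-1)!$ as predicted in~\cite{AHL}, to the occurring sign patterns; injectivity is the observation that $\sgn u_{i,j}=\sgn(x_i,x_{i\pp};x_j,x_{j\pp})$ records the cyclic position of four of the $2n$ points, and that these signs together recover the whole dihedral order, exactly as in~\cite[Subsection~11.2]{AHL}.

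\textbf{Step 2: the two base cases.} For $\alpha_0=(1,\dots,n,-1,\dots,-n)$ the pattern $\tau_{\alpha_0}$ is the all-positive one, so $\Trop_{\tau_{\alpha_0}}\cM_{\rC_{n-1}}=\Trop_{>0}\cM_{\rC_{n-1}}$; since $\cM_{\rC_{n-1}}$ is the very affine part of the binary geometry defined by $\Delta_\CS(n)$, the argument of~\cite[Section~3.12.1]{L} identifies this positive tropicalization with the fan over $\Delta_\CS(n)=\Delta_\CS(\alpha_0)$, which — invoking Conjecture~\ref{mainconj} and Proposition~\ref{contractionC} — is exactly the subfan of cones $C_{T,\varphi}$ with $(T,\varphi)$ a CSPT compatible with $\alpha_0$. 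This is the base case of~(b). For~(a) one fixes a single ASDO $\beta_0$ and shows that the sign-changed equations $\varepsilon_{\tau_{\beta_0}}(R_{i,j})$ cut out the very affine part of a binary geometry whose flag complex is $\Delta_\AS(\beta_0)$ — which, by Section~\ref{ASsec}, is the dual associahedron $\Delta(n+2)$ — so that $\Trop_{\tau_{\beta_0}}\cM_{\rC_{n-1}}=\Trop_{>0}\varepsilon_{\tau_{\beta_0}}(I_n)$ is the fan over $\Delta_\AS(\beta_0)$ by the same argument. Conceptually, near a reflection-symmetric real configuration, adjoining the two fixed points of $g$ presents $\cM_{\rC_{n-1}}$ as a chart of $\cM_{0,n+2}$ with the $u^{\rC}$-coordinates matching the relevant type~A $u$-coordinates; turning this into an actual identification of the two $u$-presentations (equivalently, verifying the stratification property of Definition~\ref{binarydef} for $\Delta(n+2)$ directly, or exhibiting a monomial change of coordinates) is the hard part of the argument.

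\textbf{Step 3: propagation by symmetry and matching cones.} The subgroup of $\fS_{2n}$ commuting with the involution $i\mapsto -i$ — a hyperoctahedral group $H_n\cong\fS_n\ltimes(\ZZ/2)^n$ containing the rotation $u_{i,j}\mapsto u_{i\pp,j\pp}$ of Example~\ref{C2example} — preserves the subvariety of Step~1 and hence acts on $\cM_{\rC_{n-1}}$ by monomial transformations of the $u^{\rC}$-coordinates. Tracing definitions, this action is compatible with the action on ASPTs (so $\sigma(C_{T,\varphi})=C_{\sigma(T,\varphi)}$, as in type~A), carries $\Delta_\AS(\alpha)$ to $\Delta_\AS(\sigma\alpha)$ and $\Delta_\CS(\alpha)$ to $\Delta_\CS(\sigma\alpha)$, and sends $\tau_\alpha$ to $\tau_{\sigma\alpha}$, whence $\Trop_{\tau_{\sigma\alpha}}\cM_{\rC_{n-1}}=\sigma\bigl(\Trop_{\tau_\alpha}\cM_{\rC_{n-1}}\bigr)$. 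An orbit-stabilizer computation shows $H_n$ acts transitively on the CSDOs (stabilizers of order $4n$) and transitively on the ASDOs (stabilizers of order $4$); applying $H_n$ to the two base cases of Step~2 then yields part~(b) for every CSDO and part~(a) for every ASDO. Finally, since by Conjecture~\ref{mainconj} every cone of $\Trop\cM_{\rC_{n-1}}$ is some $C_{T,\varphi}$ and each $\Trop_{\tau_\alpha}$ is a subfan, these identifications pin down exactly which cones $C_{T,\varphi}$ lie in each $\Trop_{\tau_\alpha}\cM_{\rC_{n-1}}$, namely those with $(T,\varphi)$ compatible with $\alpha$.

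\textbf{Main obstacle.} The genuinely new ingredient is the associahedral base case in Step~2: in contrast with type~A, where every sign change merely relabels the defining polytope, here the sign change $\varepsilon_{\tau_{\beta_0}}$ turns the cyclohedral $u$-equations into equations whose combinatorics is governed by an associahedron, and one must produce, for at least one ASDO, an explicit identification of $\varepsilon_{\tau_{\beta_0}}(\cM_{\rC_{n-1}})$ with the very affine part of a type~A binary geometry. The remaining points — that distinct AS/CS dihedral orderings give distinct sign patterns and that $\alpha\mapsto\tau_\alpha$ is $H_n$-equivariant — should be routine adaptations of the type~A analysis in~\cite{AHL}, and the whole argument is of course conditional on Conjecture~\ref{mainconj} (which one would expect to prove in tandem).
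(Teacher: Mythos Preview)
The statement you are trying to prove is labeled a \emph{Conjecture} in the paper, and the paper does not prove it. The only evidence the authors offer is computational: Section~\ref{C2section} verifies the $n=3$ case by explicitly computing $\Trop\cM_{\rC_2}$ and two of the signed tropicalizations in Macaulay2, and the paragraph following the conjecture notes consistency with the count $2^{n-2}(n+1)(n-1)!$ of~\cite{AHL} and with~\cite[Theorem~9.2]{AHL} for the positive part. There is therefore no proof in the paper to compare your proposal against.

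As a proof \emph{strategy}, what you have written is a reasonable outline of how one might hope to attack the conjecture, and it goes well beyond anything in the paper. But you should be clear-eyed that it is an outline, not a proof. Several load-bearing claims are unverified: that the hyperoctahedral group $H_n$ acts on $\cM_{\rC_{n-1}}$ by \emph{monomial} transformations of the $u^{\rC}$-coordinates (the paper only notes the cyclic rotation in Example~\ref{C2example}; the full $\fS_n$-action on $\cM_{0,n}$ being monomial is already a nontrivial fact from~\cite{AHL}, and the type~C analogue is not established anywhere); that the connected components of $\cM_{\rC_{n-1}}(\bR)$ are exactly enumerated by AS/CS dihedral orderings (this is itself stated only conjecturally in~\cite{AHL}); and, as you yourself flag, the associahedral base case --- showing that some sign change $\varepsilon_{\tau_{\beta_0}}$ turns the cyclohedral $u$-equations into a binary geometry for $\Delta(n+2)$ --- is the crux and is left entirely open. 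Your sketch is also explicitly conditional on Conjecture~\ref{mainconj}. So what you have is a plausible reduction of one open conjecture to another open problem plus several unproved lemmas, which is useful to record, but it is not a proof and the paper makes no claim to have one.
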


The conjecture above extends a conjecture made in~\cite[Subsection 11.3]{AHL} where the authors, based on computational data, hypothesize that the number of sign patterns occurring in $\cM_{\rC_{n-1}}(\bR)$ is $2^{n-2}(n+1)(n-1)!$.

The case of the positive tropicalization, i.e.\ of the sign pattern $(1,\dots,1)$, is also studied in loc.\ cit. We expect the proposed bijection to give $\tau_{\alpha_0}=(1,\dots,1)$. This would agree with \cite[Theorem 9.2]{AHL} which implies that $\Trop_{>0} \cM_{\rC_{n-1}}$ is combinatorially equivalent to a fan over the dual-cyclohedron complex $\Delta_\CS(n)$.

\section{Example: $\Trop\cM_{\rC_2}$}\label{C2section}

The description of $\Trop \cM_{\rC_{2}}$ proposed by Conjecture~\ref{mainconj} is illustrated in this section by Figure~\ref{C2graph} and Table~\ref{C2trees}, below we explain how this data is to be read. The resulting fan does indeed coincide with $\Trop \cM_{\rC_{2}}$ as verified by the authors using the \texttt{Tropical} package in Macaulay2 (\cite{M2}).

The vertices in Figure~\ref{C2graph} represent rays of the fan while edges represent two-dimensional cones. For every minimal ASPT $(T,\varphi)$ in Table~\ref{C2trees}, below it we write the generator of the corresponding ray $C_{T,\varphi}$ as determined by \eqref{CTphiC}. Here $e_{i,j}$ denotes the basis vector corresponding to the coordinate $u_{i,j}$.

Furthermore, to illustrate Conjecture~\ref{signedconj}, one may check that the subfan given by the blue 6-cycle is precisely $\Trop_\tau \cM_{\rC_2}$ for $\tau=(1,1,1,1,-1,1)$ where we use the order of coordinates from Example~\ref{C2example}. Computationally, to see if a cone $C_{T,\varphi}$ lies in $\Trop_\tau \cM_{\rC_2}$ one may compute the initial ideal $\init_w\varepsilon_\tau(I_2)$ for $w$ an interior point of $C_{T,\varphi}$, e.g.\ by using the \texttt{leadTerm} function in Macaulay2. 
Similarly, one may also check that the subfan given by the red 5-cycle is precisely $\Trop_\tau \cM_{\rC_2}$ for $\tau=(1,1,-1,1,1,1)$.

{\begin{minipage}{1\textwidth}
    \vspace{-12mm}
    \tikzset{
    pics/atree/.style args={#1/#2/#3/#4/#5/#6/#7}{
      code = {
        \foreach \i in {0, 1, 2, 3, 4, 5} {
            \coordinate (a\i) at (-60*\i:1);
            \coordinate (l\i) at ($.55*(-60*\i:1) + .55*(-60*\i:1)$);
            
        }

        \node at ($1.25*(l1)$){\footnotesize$#3$};
        \node at ($1.25*(l2)$){\footnotesize$#4$};
        \node at ($1.25*(l3)$){\footnotesize$#5$};
        \node at ($1.25*(l4)$){\footnotesize$#6$};
        \node at ($1.25*(l5)$){\footnotesize$#1$};
        \node at ($1.25*(l0)$){\footnotesize$#2$};

        \coordinate (v1) at ($.35*(l0)$);
        \coordinate (v2) at ($-1*(v1)$);
        \draw (v1)--(v2);
        \draw (l5)--(v1)--(l0);
        \draw (l1)--(v1);
        \draw (l2)--(v2)--(l3);
        \draw (l4)--(v2);
        
        \node[rectangle, draw] at ($(135:1.9) + (-.1,0)$){$#7$};
    }}}

\tikzset{
    pics/btree/.style args={#1/#2/#3/#4/#5/#6/#7}{
      code = {
        \foreach \i in {0, 1, 2, 3, 4, 5} {
            \coordinate (a\i) at (-60*\i+120:1);
            \coordinate (l\i) at ($.55*(-60*\i + 120:1) + .55*(-60*\i + 60:1)$);
        }

        \node at ($1.25*(a1)$){\footnotesize$#3$};
        \node at ($1.25*(a2)$){\footnotesize$#4$};
        \node at ($1.35*(a3)$){\footnotesize$#5$};
        \node at ($1.35*(a4)$){\footnotesize$#6$};
        \node at ($1.25*(a5)$){\footnotesize$#1$};
        \node at ($1.25*(a0)$){\footnotesize$#2$};

        \coordinate (v1) at ($.5*(l0)$);
        \coordinate (v2) at ($0*(v1)$);
        \draw (v1)--(a0);
        \draw (v1)--(a1);
        \draw (v1)--(v2);
        \draw (v2)--(a2);
        \draw (v2)--(a3);
        \draw (v2)--(a4);
        \draw (v2)--(a5);

        \node[rectangle, draw] at ($(135:1.9) + (-.1,-.2)$){$#7$};
    }}}

\tikzset{
    pics/ctree/.style args={#1/#2/#3/#4/#5/#6/#7}{
      code = {
        \foreach \i in {0, 1, 2, 3, 4, 5, 6, 7, 8, 9, 10, 11} {
            \coordinate (a\i) at (-30*\i+30:1);
            \coordinate (l\i) at ($.55*(-30*\i + 30:1) + .55*(-30*\i - 30:1)$);
        }

        \node at ($1.25*(a2)$){\footnotesize$#3$};
        \node at ($1.25*(a9)$){\footnotesize$#4$};
        \node at ($1.35*(a6)$){\footnotesize$#5$};
        \node at ($1.35*(a8)$){\footnotesize$#6$};
        \node at ($1.25*(a11)$){\footnotesize$#1$};
        \node at ($1.25*(a0)$){\footnotesize$#2$};

        \coordinate (v1) at ($.65*(l0)$);
        \coordinate (v2) at ($-1*(v1)$);
        \coordinate (v3) at (0,0);
        \draw (v1)--(v2);
        \draw (v1)--(a0);
        \draw (v1)--(a2);
        \draw (v3)--(a11);
        \draw (v2)--(a6);
        \draw (v2)--(a8);
        \draw (v3)--(a9);

        \node[rectangle, draw] at ($(135:1.9) + (-.1,0)$){$#7$};
    }}}

\tikzset{
    pics/dtree/.style args={#1/#2/#3/#4/#5/#6}{
      code = {
        \foreach \i in {0, 1, 2, 3, 4, 5} {
            \coordinate (x\i) at (-60*\i+120:1);
            \coordinate (y\i) at ($.55*(-60*\i + 120:1) + .55*(-60*\i + 60:1)$);
        }

        \node at ($1.25*(x1)$){\footnotesize$#3$};
        \node at ($(0,.1)+1.25*(x2)$){\footnotesize$#4$};
        \node at ($1.35*(x3)$){\footnotesize$#5$};
        \node at ($1.35*(x4)$){\footnotesize$#6$};
        \node at ($(0,.1)+1.25*(x5)$){\footnotesize$#1$};
        \node at ($1.25*(x0)$){\footnotesize$#2$};

        \coordinate (v1) at ($.35*(90:1)$);
        \coordinate (v2) at ($-.35*(v1)$);
        \draw (v1)--(x0);
        \draw (v1)--(x1);
        \draw (v1)--(v2);
        \draw (0,.1)--($(x2)+(0,.1)$);
        \draw (v2)--(x3);
        \draw (v2)--(x4);
        \draw (0,.1)--($(x5)+(0,.1)$);
    }}}

\tikzset{
    pics/etree/.style args={#1/#2/#3/#4/#5/#6}{
      code = {
        \foreach \i in {0, 1, 2, 3, 4, 5, 6, 7, 8, 9, 10, 11} {
            \coordinate (x\i) at (-30*\i+30:1);
            \coordinate (y\i) at ($.55*(-30*\i + 30:1) + .55*(-30*\i - 30:1)$);
        }

        \coordinate (v1) at ($.65*(y0)$);
        \coordinate (v2) at ($-1*(v1)$);
        \coordinate (v3) at ($.5*(v1)$);
        \coordinate (v4) at ($-.5*(v1)$);
        \draw (v1)--(v2);
        \draw (v1)--(x0);
        \draw (v1)--(x2);
        \draw (v3)--($(v3) + (0,.75)$);
        \draw (v2)--(x6);
        \draw (v2)--(x8);
        \draw (v4)--($(v4) + (0,.75)$);

        \node at ($1.25*(x2)$){\footnotesize$#3$};
        \node at ($1.25*(v4) + 1.25*(0,.75)$){\footnotesize$#4$};
        \node at ($1.35*(x6)$){\footnotesize$#5$};
        \node at ($1.35*(x8)$){\footnotesize$#6$};
        \node at ($1.25*(v3) + 1.25*(0,.75)$){\footnotesize$#1$};
        \node at ($1.25*(x0)$){\footnotesize$#2$};
    }}}

\tikzset{
    pics/ftree/.style args={#1/#2/#3/#4/#5/#6}{
      code = {
        \foreach \i in {0, 1, 2, 3, 4, 5, 6, 7, 8, 9, 10, 11} {
            \coordinate (x\i) at (-30*\i+30:1);
            \coordinate (y\i) at ($.55*(-30*\i + 30:1) + .55*(-30*\i - 30:1)$);
        }

        \coordinate (v1) at (90:.65);
        \coordinate (v2) at (-30:.65);
        \coordinate (v3) at (210:.65);
        \coordinate (x9) at ($(v1) + (135:.35)$);
        \coordinate (x11) at ($(v1) + (45:.35)$);
        \coordinate (x2) at ($(v2) + (0:.35)$);
        \coordinate (x3) at ($(v2) + (-90:.35)$);
        \coordinate (x5) at ($(v3) + (-90:.35)$);
        \coordinate (x6) at ($(v3) + (180:.35)$);
        
        \draw (v1)--(x9);
        \draw (v1)--(x11);
        \draw (v1)--(0,0);
        \draw (v2)--(x2);
        \draw (v2)--(x3);
        \draw (v2)--(0,0);
        \draw (v3)--(x5);
        \draw (v3)--(x6);
        \draw (v3)--(0,0);

        \node at ($1.35*(x9)$){\footnotesize$#1$};
        \node at ($1.35*(x11)$){\footnotesize$#2$};
        \node at ($1.35*(x2)$){\footnotesize$#3$};
        \node at ($1.35*(x3)$){\footnotesize$#4$};
        \node at ($1.35*(x5)$){\footnotesize$#5$};
        \node at ($1.35*(x6)$){\footnotesize$#6$};
    }}}
    \centering
    \tikzset{hexagon/.pic={
    
}}

\tikzset{
    pics/chexagon/.style args={#1/#2/#3/#4/#5/#6/#7}{
      code = {
        \foreach \i in {0, 1, 2, 3, 4, 5} {
            \coordinate (k\i) at (-60*\i+120:1);
            \coordinate (p\i) at ($.6*(-60*\i+120:1) + .6*(-60*\i+180:1)$);
            
        }

        \node at ($1.15*(p1)$){\scriptsize$#1$};
        \node at ($1.15*(p2)$){\scriptsize$#2$};
        \node at ($1.15*(p3)$){\scriptsize$#3$};
        \node at ($1.15*(p4)$){\scriptsize$-#1$};
        \node at ($1.15*(p5)$){\scriptsize$-#2$};
        \node at ($1.15*(p0)$){\scriptsize$-#3$};

        \draw (k0)--(k1)--(k2)--(k3)--(k4)--(k5)--cycle;

        \draw (k#4)--(k#5);
        \tikzmath{\n = int(mod(#4 + 3, 6)); \m = int(mod(#5 + 3, 6));}
        \draw (k\n)--(k\m);

        \draw (k#6)--(k#7);
        \tikzmath{\n = int(mod(#6 + 3, 6)); \m = int(mod(#7 + 3, 6));}
        \draw (k\n)--(k\m);
    }}}

\tikzset{
    pics/ahexagon/.style args={#1/#2/#3/#4/#5/#6/#7}{
      code = {
        \foreach \i in {0, 1, 2, 3, 4, 5} {
            \coordinate (h\i) at (-60*\i+120:1);
            \coordinate (j\i) at ($.6*(-60*\i+120:1) + .6*(-60*\i+180:1)$);
            
        }

        \node at ($1.15*(j1)$){\scriptsize$#1$};
        \node at ($1.15*(j2)$){\scriptsize$#2$};
        \node at ($1.15*(j3)$){\scriptsize$#3$};
        \node at ($1.15*(j4)$){\scriptsize$-#3$};
        \node at ($1.15*(j5)$){\scriptsize$-#2$};
        \node at ($1.15*(j0)$){\scriptsize$-#1$};

        \draw (h0)--(h1)--(h2)--(h3)--(h4)--(h5)--cycle;

        \draw (h#4)--(h#5);
        \tikzmath{\n = int(mod(#4, 6)); \m = int(mod(-(#5 - 4) + 4, 6));}
        \draw (h\n)--(h\m);

        \draw (h#6)--(h#7);
        \tikzmath{\n = int(mod(#6, 6)); \m = int(mod(-(#5 - 4) + 4, 6));}
        \draw (h\n)--(h\m);
    }}}

\tikzset{
    pics/ctree/.style args={#1/#2/#3/#4/#5/#6/#7/#8/#9}{
      code = {
        \foreach \i in {0, 1, 2, 3, 4, 5} {
            \coordinate (x\i) at (-60*\i+120:1);
            \coordinate (e\i) at ($.55*(-60*\i+120:1) + .55*(-60*\i+180:1)$);
            
        }

        \node at ($1.15*(e1)$){\scriptsize$#1$};
        \node at ($1.15*(e2)$){\scriptsize$#2$};
        \node at ($1.15*(e3)$){\scriptsize$#3$};
        \node at ($1.15*(e4)$){\scriptsize$-#1$};
        \node at ($1.15*(e5)$){\scriptsize$-#2$};
        \node at ($1.15*(e0)$){\scriptsize$-#3$};

        \coordinate (v1) at ($.4*(e#4) + .4*(e#6)$);
        \coordinate (v2) at ($.4*(e#7) + .4*(e#9)$);
        \draw (v1)--(v2);
        \draw ($.9*(e#4)$)--(v1)--($.9*(e#6)$);
        \draw (v1)--($.9*(e#5)$);
        \draw ($.9*(e#7)$)--(v2)--($.9*(e#9)$);
        \draw (v1)--($.9*(e#8)$);
    }}}
\scalebox{.825}
{\begin{tikzpicture}[scale=2]
    \foreach \i in {0, 1, 2, 3, 4, 5} {
        \coordinate (a\i) at (60*\i:2.5);
        \coordinate (b\i) at (60*\i:1.75);
        \coordinate (c\i) at (60*\i+30:.75);
        
        \coordinate (l\i) at ($.55*(60*\i:3) + .55*(60*\i+60:3)$);
    }
    \coordinate (alpha) at (0,0);

    \draw[ultra thick] (a0)--(a1)--(a2)--(a3)--(a4)--(a5)--cycle;
    \draw (b1)--(b3)--(b5)--cycle;
    \draw[ultra thick] (a2)--(c1);
    \draw (c1)--(b5);
    \draw[ultra thick] (a4)--(c3);
    \draw (c3)--(b1);
    \draw[ultra thick] (a0)--(c5);
    \draw (c5)--(b3);
    \draw[ultra thick] (c1)--(alpha);
    \draw[ultra thick] (c3)--(alpha);
    \draw[ultra thick] (c5)--(alpha);
    \draw (a1)--(b1) (a3)--(b3) (a5)--(b5);

    \node[below] at (alpha){$13$};

    \node[left] at (c1){$10$};
    \node[left] at (c3){$11$};
    \node[above] at (c5){$12$};

    \node[above left] at (b1){$7$};
    \node[above left] at (b3){$8$};
    \node[below left] at (b5){$9$};

    \node at ($1.1*(a1)$){$4$};
    \node at ($1.1*(a2)$){$1$};
    \node at ($1.1*(a5)$){$6$};
    \node at ($1.1*(a0)$){$3$};
    \node at ($1.1*(a3)$){$5$};
    \node at ($1.1*(a4)$){$2$};


    \draw[ultra thick, red] (a5)--(a0)--(a1);
    \draw[red] (a1)--(b1)--(b5)--(a5);

    \draw[ultra thick, blue] (alpha)--(c3)--(a4)--(a3)--(a2)--(c1)--cycle;
    
    \foreach \i in {0,1,2,3,4,5} {
        \filldraw (a\i) circle (1.5pt);

        \tikzmath{\n = int(mod(\i, 2));}
        \ifnum \n = 1
        \filldraw (b\i) circle (1pt);
        \filldraw (c\i) circle (1.5pt);
        \fi
    }
    \filldraw (alpha) circle (1.5pt);
    
    \filldraw[red] (b1) circle (1pt);
    \filldraw[red] (b5) circle (1pt);
    \filldraw[red] (a0) circle (1.5pt);
    \filldraw[red] (a1) circle (1.5pt);
    \filldraw[red] (a5) circle (1.5pt);

    \filldraw[blue] (a2) circle (1.5pt);
    \filldraw[blue] (a3) circle (1.5pt);
    \filldraw[blue] (a4) circle (1.5pt);
    \filldraw[blue] (c1) circle (1.5pt);
    \filldraw[blue] (c3) circle (1.5pt);
    \filldraw[blue] (alpha) circle (1.5pt);

    \coordinate (T1) at ($(b1)!.35!(b5)$);
    \pic at ($(T1) + (.7,0)$) [scale=.75] {dtree=-2/-1/1/2/3/-3};
    \draw[->] ($(T1) + (.25,-.2)$)--($(T1) + (.05,-.2)$);

    \coordinate (T2) at ($(a2)!.5!(a3)$);
    \pic at ($(T2) + (-.75,0)$) [scale=.75] {etree=-2/-3/1/2/-1/3};
    \draw[->] ($(T2) + (-.4,0)$)--($(T2) + (-.1,0)$);

    \coordinate (T3) at ($(a4)!.4!(a5)$);
    \pic at ($(T3) + (0,.5)$) [scale=.75] {ftree=-3/3/1/2/-2/-1};
    \draw[->] ($(T3) + (.5,.15)$)--($(T3) + (1.35,.15)$);

    \coordinate (T4) at ($(a0)!.5!(a1)$);
    \pic at ($(T4) + (.75,0)$) [scale=.75, white] {etree=3/-3/-1/2/-2/1};
\end{tikzpicture}}
    \vspace{-13mm}
    \renewcommand{\thefigure}{\Alph{figure}}
    
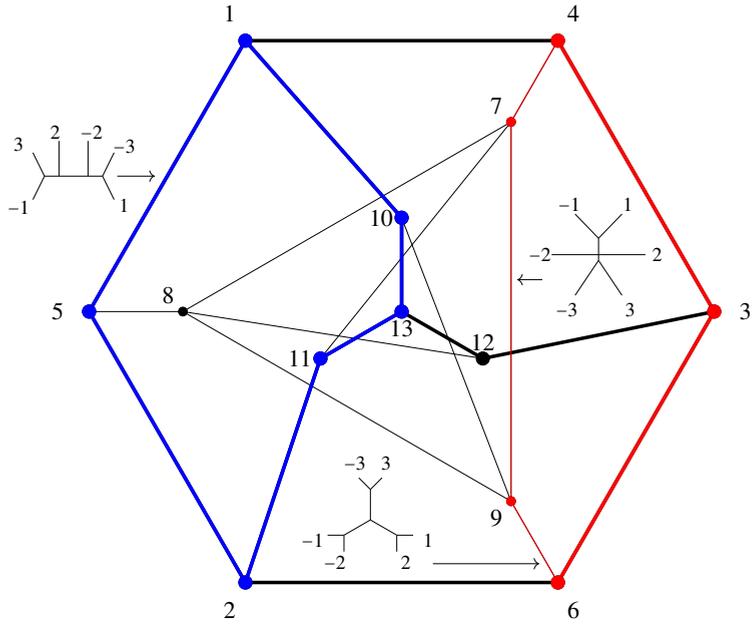
\captionof{figure}{the complex $\Theta_\AS(3)$.}\label{C2graph}
\end{minipage}}

{\begin{minipage}{1\textwidth}
    
    \vspace{-3mm}
    \centering

    \scalebox{.875}{\begin{tabular}{cccc}
         \begin{tikzpicture} \pic at (0,0) [scale=.75] {atree=-3/1/-2/2/-1/3/1}; \end{tikzpicture} & 
         \begin{tikzpicture} \pic at (0,0) [scale=.75] {ctree=1/2/3/-1/-3/-2/4}; \end{tikzpicture} & 
         \begin{tikzpicture} \pic at (0,0) [scale=.75] {btree=-2/-1/1/2/3/-3/7}; \end{tikzpicture} &
         \begin{tikzpicture} \pic at (0,0) [scale=.75] {ctree=-3/1/-2/3/2/-1/10}; \end{tikzpicture} \\[-2mm]
         
         $e_{1,-1}$ & $e_{1,3}$ & $e_{1,3} - e_{1,-1} - e_{3,-3}$ & $2e_{1,-1} - e_{1,3} - e_{1,-2}$ \\[2mm]
         
         \begin{tikzpicture} \pic at (0,0) [scale=.75] {atree=1/-3/2/-2/3/-1/2}; \end{tikzpicture} & 
         \begin{tikzpicture} \pic at (0,0) [scale=.75] {ctree=2/-3/1/-2/-1/3/5}; \end{tikzpicture} & 
         \begin{tikzpicture} \pic at (0,0) [scale=.75] {btree=-1/-2/2/1/3/-3/8}; \end{tikzpicture} & 
         \begin{tikzpicture} \pic at (0,0) [scale=.75] {ctree=1/2/-3/-1/3/-2/11}; \end{tikzpicture} \\[-2mm]
         
         $e_{2,-2}$ & $e_{1,-2}$ & $e_{1,-2} - e_{1,-1} - e_{2,-2}$ & $2e_{2,-2} - e_{1,-2} - e_{2,-3}$ \\[2mm]
         
         \begin{tikzpicture} \pic at (0,0) [scale=.75] {atree=1/2/3/-3/-2/-1/3}; \end{tikzpicture} &
         \begin{tikzpicture} \pic at (0,0) [scale=.75] {ctree=-3/1/2/3/-2/-1/6}; \end{tikzpicture} & 
         \begin{tikzpicture} \pic at (0,0) [scale=.75] {btree=-2/-3/3/2/1/-1/9}; \end{tikzpicture} &
         \begin{tikzpicture} \pic at (0,0) [scale=.75] {ctree=2/1/3/-2/-3/-1/12}; \end{tikzpicture} \\[-2mm]
         
         $e_{3,-3}$ & $e_{2,-3}$ & $e_{2,-3} - e_{2,-2} - e_{3,-3}$ & $2e_{3,-3} - e_{1,3} - e_{2,-3}$
    \end{tabular}}

    \hspace{-2.25cm}
    \scalebox{.875}{\begin{tabular}{cc}
        \begin{tikzpicture} \pic at (0,0) [scale=.75] {atree=1/-2/3/-3/2/-1/13}; \node at (4.75,0){$e_{1,-1} + e_{2,-2} + e_{3,-3} - e_{1,3} - e_{1,-2} - e_{2,-3}$}; \end{tikzpicture} &
    \end{tabular}}
    \setcounter{table}{1}
    \renewcommand{\thetable}{\Alph{table}}
    \captionof{table}{ASPTs and ray generators corresponding to vertices of $\Theta_\AS(3)$.}
    \label{C2trees}
\end{minipage}}

\bigskip \bigskip

\noindent {\bf Acknowledgments.} We thank Thomas Lam for introducing us to these topics and for his insightful suggestions. Thanks also to Karin Baur, Zachary Greenberg and Giulio Salvatori for helpful discussions, and to Chiara Meroni and Hannah Tillman-Morris for their feedback on the paper.

\begin{small}

\end{small}

\end{document}